\documentclass[twoside]{amsart}
\usepackage{latexsym}
\usepackage{amssymb,amsmath,amsopn}
\usepackage[dvips]{graphicx}   
               {\begin{list}{}{\leftmargin#1\rightmargin#2}\item{}}%
               {\end{list}}

\newtheorem{thm}{Theorem}{\bf}{\sl}
\newtheorem{lem}{Lemma}{\bf}{\sl}
\newtheorem{defn}{Definition}
\newtheorem{rem}{Remark}{\bf}{\rm}
\newtheorem{con}{Conjecture}{\bf}{\rm}
\renewcommand{\Re}{\mathbb R}
\renewcommand{\S}{\mathbb S}
\newcommand{\B}{\mathbf B}

\newcommand{\dif}{\;\mathrm{d}}
\def\bea{\begin{eqnarray}}
\def\eea{\end{eqnarray}}
\def\ben{\begin{equation}}
\def\een{\end{equation}}

\DeclareMathOperator{\inter}{int}
\DeclareMathOperator{\bd}{bd}

\DeclareMathOperator{\grad}{grad}

\DeclareMathOperator{\supp}{supp}

\parskip=4pt

\begin{document}
\title[]{A topological classification of convex bodies}
\author[G. Domokos, Z. L\'angi \and T. Szab\'o]{G\'abor Domokos, Zsolt L\'angi \and T\'\i mea Szab\'o}

\address{G\'abor Domokos, Dept. of Mechanics, Materials and Structures, Budapest University of Technology,
M\H uegyetem rakpart 1-3., Budapest, Hungary, 1111}
\email{domokos@iit.bme.hu}
\address{Zsolt L\'angi, Dept. of Geometry, Budapest University of Technology,
Egry J\'ozsef u. 1., Budapest, Hungary, 1111}
\email{zlangi@math.bme.hu}
\address{T\'\i mea Szab\'o, Dept. of Mechanics, Materials and Structures, Budapest University of Technology,
M\H uegyetem rakpart 1-3., Budapest, Hungary, 1111}
\email{tszabo@szt.bme.hu}

\subjclass{52A15, 53A05, 53Z05}
\keywords{equilibrium, convex surface, Morse-Smale complex, vertex splitting, quadrangulation, pebble shape.}

\begin{abstract}
The shape of homogeneous, generic, smooth convex bodies as described by the Euclidean distance with nondegenerate critical points, measured from the center of mass represents a rather restricted class $\mathcal{M}_C$ of Morse-Smale functions on $\S^2$. Here we show that even $\mathcal{M}_C$ exhibits the complexity known for general Morse-Smale functions on $\S^2$ by exhausting all combinatorial possibilities: every 2-colored quadrangulation of the sphere is isomorphic to a suitably represented Morse-Smale complex associated with a function in $\mathcal{M}_C$ (and vice versa). We prove our claim by an inductive algorithm, starting from the path graph $P_2$ and generating convex bodies corresponding to quadrangulations with increasing number of vertices by performing each combinatorially possible vertex splitting by a convexity-preserving local manipulation of the surface. Since convex bodies carrying Morse-Smale complexes isomorphic to $P_2$ exist, this algorithm not only proves our claim but also generalizes the known classification scheme in \cite{Varkonyi}. Our expansion algorithm is essentially the dual procedure to the algorithm presented by Edelsbrunner et al. in \cite{Edelsbrunner}, producing a hierarchy of increasingly coarse Morse-Smale complexes. We point out applications to pebble shapes. \end{abstract}

\maketitle

\section{Introduction}

The study of static equilibria of convex bodies was initiated by Archimedes \cite{Archimedes1} and attracts even current interest (cf. \cite{Conway}, \cite{Dawson}, \cite{DawsonFinbow}, \cite{Heppes} or \cite{Zamfirescu}).
In mathematical terms, a convex body $K\in \mathcal{K}$ can be characterized by the scalar distance function $r_K : \S^2 \to (0, \infty)$, measured from the center of mass of $K$.
Static equilibrium points coincide with critical points of $r_K$, characterized by $\nabla r_K=0$. We call $K$ \emph{generic} if $r_K$ is a Morse-Smale function, i.e. it has only non-degenerate critical points
and the stable and unstable manifolds of any two critical points (under the flow induced by $\nabla r_K$) are transverse \cite{Zomorodian}. If we denote the numbers of stable, unstable and saddle-type critical points by $S,U,H$, respectively, then
$\{S,U\}$ is called the \emph{primary equilibrium class} of $K$ and the number of saddles can be obtained via the Poincar\'e-Hopf formula as
\ben \label{poincare}
H=S+U-2.
\een
In \cite{Varkonyi}, the above classification was introduced and it was shown that the primary classification system is complete in the sense that there are no empty primary classes.
Our current motivation is to go beyond this result and establish the completeness according to a more refined classification for generic convex bodies,
based on the topological arrangement of equilibria, i.e., to show that every combinatorially possible arrangement physically also exists.

If $K$ is generic then the \emph{Morse-Smale complex} of $r_K$ consists of the intersections of the stable and unstable manifolds of each of the critical points and it is a CW complex of dimension 2 on $\S^2$, i.e its 1-skeleton is a graph  embedded in $\S^2$.
 One possible representation of this graph is a special, 3-colored quadrangulation  $Q^{3\star}(K)$ on $\S^2$  and we refer to it as the
\emph{primal representation} of the Morse-Smale complex. The vertices of $Q^{3\star}(K)$ correspond to the critical points and they can be colored according to the stability-type by $S,U,H$, respectively. According to \cite{Edelsbrunner},
$Q^{3\star}(K)$ is a special quadrangulation where the colors of the vertices on every quadrangle run in cyclic order $S, H, U,H$ and the numbers of colored vertices satisfy Eq.(\ref{poincare}).

Thoroughout the paper we refer to embedded graphs (i.e. drawings of graphs) simply as `graphs',
however, it is important to note that two distinct (non-ho\-me\-o\-mor\-phic) embedded graphs may be represented by identical (isomorphic) abstract graphs. In the current paper
we are only interested in distinguishing convex bodies associated with non-homeomorphic embedded graphs, however, a less refined classification can be constructed by
distinguishing between convex bodies associated with non-isomorphic abstract graphs. Following this concept,
we denote the set of such 3-colored quadrangulations by $\mathcal{Q}^{3\star}$ and we call $Q^{3\star}(K)\in \mathcal{Q}^{3\star}$ the \emph{tertiaryy equilibrium class} of $K$. Figure \ref{fig:et2} illustrates two convex bodies, their associated gradient fields, their Morse-Smale complexes and their tertiary classes. As an intermediate classification system between primary and tertiary, secondary equilibrium classes may be defined by the
associated abstract graph, however, this is not the topic of the current paper.

\begin{figure}[ht]
\includegraphics[width=\textwidth]{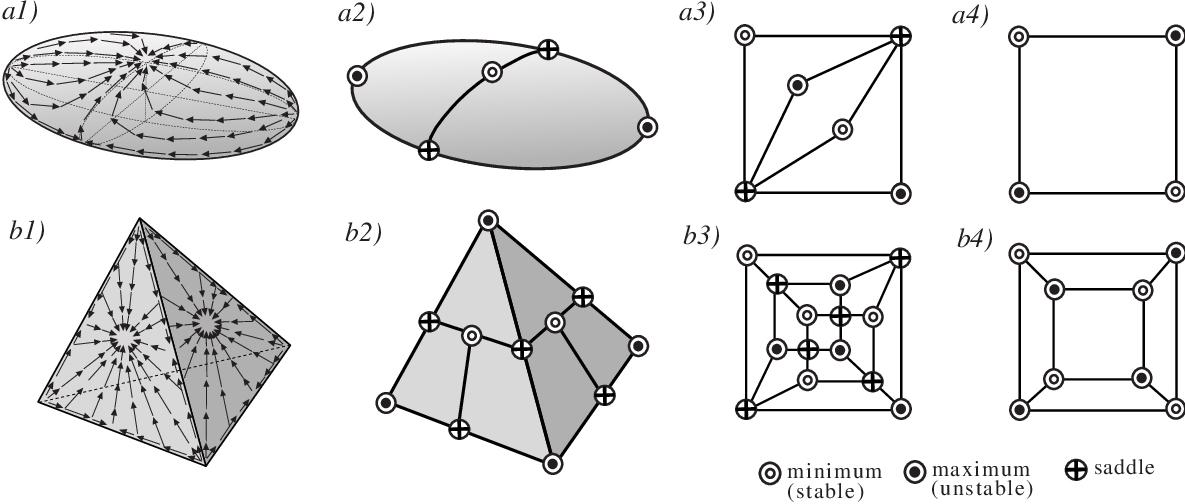}
\caption[]{Examples for equilibrium classes. a1) Gradient field of tri-axial ellipsoid in primary class $\{2,2\}$. a2) Morse-Smale complex associated with the tri-axial ellipsoid. a3) planar drawing of the graph representing the tertiary class of the tri-axial ellipsoid. a4) quasi-dual representation of the complex. b1) Gradient field of a smoothened tetrahedron $T$ in primary class $\{4,4\}$. b2) Morse-Smale complex associated with $T$. b3) planar drawing of the graph representing the tertiary class of $T$. b4) quasi-dual representation of the complex}
\label{fig:et2}
\end{figure} 

Our main goal is to prove that the tertiary classification is, similarly to the primary one, complete, i.e. that
\begin{thm}\label{thm:main}
For every $3$-colored quadrangulation $Q^{3\star} \in {\mathcal Q}^{3\star}$ of $\S^2$, there is a homogeneous convex body $K$, with a $C^\infty$-class boundary, such that $r_K$ is Morse-Smale, and the Morse-Smale complex of $r_K$ is 
isomorphic to $Q^{3\star}$ via an embedding preserving isomorphism.
\end{thm}
In order to explain the main idea of our proof we have to introduce an alternative, equivalent graph representation of the Morse-Smale complex.
By adding the $S-U$ diagonal edges to all $Q^{3\star} \in \mathcal Q^{3\star}$ and removing all $H$-colored vertices we introduce the \emph{quasi-dual} representation \cite{Dong} and we obtain the
set $\mathcal{Q}^2$ of two-colored quadrangulations on $\S^2$ (cf. Figure \ref{fig:quasi-dual}). We denote the two-colored quadrangulation associated with the convex body $K$ by $Q^2(K)$.  We also introduce ${\mathcal Q}^2_n\subset {\mathcal Q}^2$ for the 2-colored ($S,U$) quadrangulations of the sphere with exactly $S+U=n$ vertices and
the class of convex bodies with exactly $S+U=n$ extremal points will be denoted by $\mathcal{K}_n\subset \mathcal{K}$.
It is relatively easy to see (as we show in Section \ref{sec:graphs} based on results from \cite{Bagatelj}, \cite{Brinkmann} and \cite{Negami}) that by subsequent applications
of an operation called \em face contraction \rm (where two diagonal vertices in a quadrangular face are merged and the resulting double edges are also merged) an arbitrarily selected  two-colored quadrangulation $\bar Q^2 \in \mathcal{Q}^2_n$ can be
collapsed onto the path graph $P_2$ (the unique graph with 2 vertices and one edge \cite{Gross}) via a sequence of $(n-1)$ graphs. If we list the latter in reverse order, we obtain what we call \em combinatorial expansion sequences (for examples, cf. Figure~\ref{fig:metagraf} : \rm
\ben \label{gen}
Q^2_2\equiv P_2,Q^2_3,Q^2_4,\ldots,Q^2_{n-1},Q^2_{n}\equiv \bar Q^2 \mbox{ such that }
Q^2_i\in \mathcal{Q}^2_i.
\een
Subsequent elements of (\ref{gen}) are connected via \em vertex splittings \rm (cf.\cite{Bagatelj}, \cite{Brinkmann}), the inverse operation of face contraction. Both face contraction and vertex splitting
can be carried out on any $Q^2 \in \mathcal{Q}^2_n, n>2$, and thus the sequences (\ref{gen}) can be generated both forward (vertex splittings) and backward (face contractions). Neither the forward nor
the backward sequence is unique.
To prove Theorem \ref{thm:main}, in Section \ref{sec:geometry} we  show that each combinatorially possible vertex splitting can be realized on convex bodies by a convexity-preserving, local manipulation of the surface (consisting of two consecutive, local truncations) which we call \em equilibrium splitting \rm. By this operation we can create
\em geometric expansion sequences \rm
\ben \label{gen1}
K_2,K_3,K_4,\ldots,K_{n-1},K_{n}\equiv \bar K \mbox{ such that }
K_i\in \mathcal{K}_i,
\een
which are generated in such a way that they are linked to an arbitrarily, a priori given combinatorial expansion sequence (\ref{gen}) via
\ben
Q^2(K_i)=Q^2_i.
\een
We remark that a similar strategy of determining a sequence of geometric transformations appears also in the proof of Andreev's theorem \cite{Andreev}, \cite{RHD07}.
Combinatorial sequences belonging to an arbitrary graph $\bar Q^2 \in \mathcal{Q}^2_n$ can be generated by running (\ref{gen}) in reverse order, arriving at $P_2$.
Since convex bodies in class $\mathcal{K}_2$ carrying Morse-Smale complexes homeomorphic to $P_2$ exist (cf. \cite{Varkonyi}),
subsequently we can run (\ref{gen1}) forward to obtain the desired convex body $\bar K$ with $Q^2(\bar K)=\bar Q^2$.
This algorithm not only proves Theorem \ref{thm:main} but also generalizes
the primary scheme of \cite{Varkonyi}, which uses only the numbers $S,U$ of stable and unstable equilibria to classify convex shapes. 
We remark that an essential part of this generalization is the geometric truncation algorithm 
described in Section \ref{sec:geometry} since, as it was shown in \cite{Kapolnai}, combinatorial expansion sequences based on the the geometric truncations presented in \cite{Varkonyi} are \emph{not} capable of generating all tertiary equilibrium classes.
We also remark that, unlike combinatorial sequences (\ref{gen}), which can be run both forward and backward, our geometrical expansion sequences (\ref{gen1}) can be run only forward from an arbitrary initial convex body $K$.
Nevertheless, in a different setting, the geometric analogy of face contraction was considered in \cite{Bremer}.

The local truncations associated with each geometric expansion step are extremely delicate. To have the ability of performing \it any \rm combinatorially possible splitting
(i.e. to obtain a convex body with the desired Morse-Smale complex) one has to render the vicinity of the given critical point \it arbitrarily sensitive \rm before the actual splitting is achieved with a planar truncation.
We achieve arbitrary sensitivity in Lemma ~\ref{lem:smallradius} by constructing a preliminary truncation with a sphere the radius of which is sufficiently close to the distance of the equilibrium point to be split from the center of mass.
If only the critical point is specified, however, the combinatorial structure of the splitting is arbitrary then the geometric task is substantially simpler \cite{Varkonyi}.


Our expansion algorithm is essentially the dual procedure to the one presented by Edelsbrunner et al. \cite{Edelsbrunner} (cf. also \cite{Dey}),
producing a hierarchy of increasingly coarse Morse-Smale complexes (i.e. running combinatorial expansion sequences in
reverse order compared to (\ref{gen})); a topic
attracting current interest in computational topology (cf. \cite{Bauer} or \cite{Gyulassy}).
Beyond illustrating that the tertiary classification scheme for generic convex bodies is complete
(i.e. the Morse-Smale complexes of generic convex bodies exhausts all possible combinatorial possibilities) and
offering a modest link between Morse theory and convex geometry, geometric expansion sequences defined in equation (\ref{gen1}) appear to be the
natural building blocks for the mathematical description of  pebble shape evolution under collisional abrasion.

Our paper is structured as follows. In Section \ref{sec:graphs} we introduce our combinatorial tools and show how Theorem~\ref{thm:main} follows from the main geometric lemma: Lemma~\ref{lem:main}. In Section \ref{sec:geometry}, we prove Lemma~\ref{lem:main} in three steps, formulated in Lemmas~\ref{lem:smoothing_subroutine}, \ref{lem:spherical_neighborhood} and \ref{lem:vertex_splitting}, and proved in Subsections~\ref{subsec:smooth}, \ref{subsec:sphere} and \ref{subsec:splitting}, respectively, by means of other auxiliary lemmas. We illustrate our results and discuss some related issues (including pebble abrasion) in Section \ref{sec:summary}.

\section{Preliminaries and the proof of Theorem~\ref{thm:main}}\label{sec:graphs}

In this section, first, we introduce the background for the proof, and, finally, show how Theorem~\ref{thm:main} follows from our main lemma: Lemma~\ref{lem:main}.
Throughout the paper, by the \emph{center} of a convex body $K$ we mean its center of mass.
Furthermore, the \emph{Morse-Smale complex of $K$} is meant to be the Morse-Smale complex defined on $\bd K$ by the
Euclidean distance function from the center of $K$. If we measure distance from a different point $w$,
then we write about the \emph{Morse-Smale complex of $K$ with respect to $w$}.

Let us recall that a \emph{quadrangulation} of $\S^2$ is the embedding of a finite graph on the 2-sphere such that it may have multiple edges and each face is bounded by a closed walk of length 4 (cf. \cite{Archdeacon} or \cite{Brinkmann}).
We note that this closed walk is permitted to contain the same vertex or edge more than once, and, following Archdeacon et al. \cite{Archdeacon}, we regard the path graphs (cf. \cite{Gross}) $P_2$ and $P_3$, the only trees with 2 or 3 vertices, respectively, as quadrangulations.
Let ${\mathcal Q}^0$ and ${\mathcal Q}$ denote the class of not-colored, and 2-colored quadrangulations, respectively. 
Observe that every quadrangulation is bipartite, and thus, 2-colorable, and that any quadrangulation (connected bipartite graph) can be colored in a unique way.
Furthermore, let ${\mathcal Q}^{3\star}$ be the class of 3-colored quadrangulations, with colors ($S,U,H$), satisfying $S+U-H=2$ (cf. \ref{poincare}),
and $deg(v)=4$ for any $v\in H$.

An important tool in describing Morse-Smale complexes of $\S^2$ is the so-called Quadrangle Lemma from \cite{Edelsbrunner}.

\begin{lem}[Edelsbrunner et al.]\label{lem:quadrangle}
Each region of the Morse complex is a quadrangle with maximum, saddle, minimum, and saddle point as vertices, in this order around the region. The boundary is possibly glued to itself along edges or vertices.
\end{lem}

Based on this lemma, a complete combinatorial description of a Morse-Smale complex can be given (\cite{Edelsbrunner}, \cite{Dong} and \cite{Zomorodian}):
it corresponds to a 3-colored quadrangulation in ${\mathcal Q}^{3\star}$, where colors correspond to the 3 types of non-degenerate critical points
(maxima, minima and saddles), satisfies the Quadrangle Lemma, and the degree of every saddle is $4$.
We follow Dong et al. \cite{Dong} and call this representation of the complex the \emph{primal} Morse-Smale graph.
Saddle points can be removed from the primal Morse-Smale graph without losing information:
first we connect maxima and minima in the quadrangles, then cancel saddle points and edges incident to them (see Figure~\ref{fig:quasi-dual}).
We call this representation the \emph{quasi-dual} Morse-Smale graph (cf. \cite{Dong}).

\begin{figure}[ht]
\includegraphics[width=1.1\textwidth]{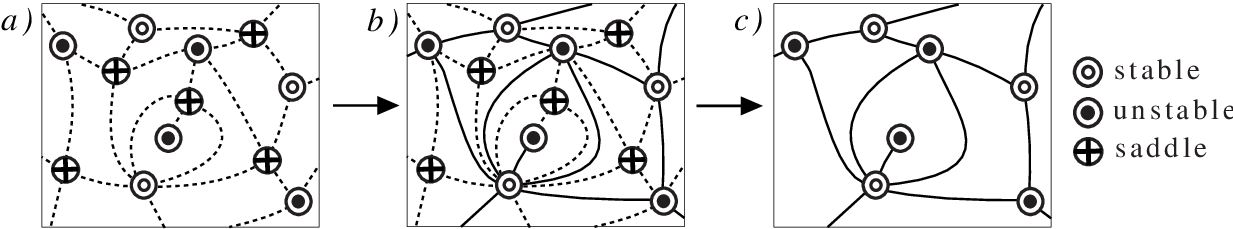}
\caption[]{Different representations of a Morse-Smale complex. a) Primal Morse-Smale graph in class ${\mathcal Q}^{3\star}$ b) Primal Morse-Smale graph with the maxima and minima connected c) Quasi-dual Morse-Smale graph in class ${\mathcal Q}$}
\label{fig:quasi-dual}
\end{figure}

The following lemma is a slight generalization of results of Bagatelj \cite{Bagatelj}, and also Negami and Nakamoto \cite{Negami}. To state it, for any guadrangulation $Q \in \mathcal{Q}^{0}$ and face $F$ of $Q$ with boundary walk $v_1e_1v_2e_2v_3e_3v_4e_4v_5$, where $v_1=v_5$ and edges are $e_i=\{v_i,v_{i+1}\}$, $i=1,2,3,4$ (cf. Figure~\ref{fig:face-contraction} a), left), we define the \emph{contraction of the face $F$} as the following:
shrink the region $F$ by identifying the vertices $v_1$ and $v_3$, the edges $e_1$ and $e_2$, and the edges $e_3$ and $e_4$.
Note that this operation on $Q$ is invertible; the inverse operation is called \emph{vertex splitting} (cf. Figure~\ref{fig:splittings}).

\begin{figure}[ht]
\includegraphics[width=0.6\textwidth]{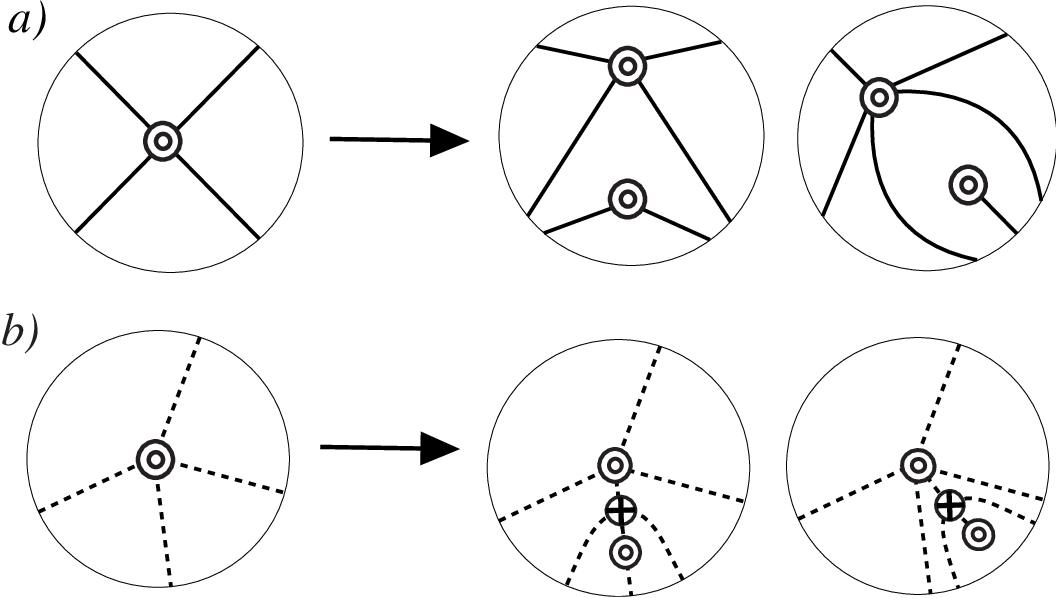}
\caption[]{a) Two splittings of a vertex in quasi-dual representation b) The same vertex splittings in primal representation}
\label{fig:splittings}
\end{figure}

\begin{lem}\label{lem:graph_expansion}
Any quadrangulation $Q \in \mathcal{Q}^{0}$ can be reduced to the path graph $P_2$ via a sequence of face contractions, or equivalently, $Q$ can be obtained from $P_2$ via a sequence of vertex splittings.
\end{lem}

\begin{proof}
Let $Q \in \mathcal{Q}^{0}$ and face $F$ of $Q$ with boundary walk $v_1e_1v_2e_2v_3e_3v_4e_4v_5$, where $v_1=v_5$ and edges are $e_i=\{v_i,v_{i+1}\}$, $i=1,2,3,4$.
If $Q$ is not simple, these vertices and edges may not be distinct, however,
the definition of quasi-dual representation (i.e. that these graphs have been generated by removing the saddle points of degree 4 from
the triangular representation) admits only two kinds of coincidences: two diagonal vertices $v_2$ and $v_4$ may coincide, and in this
case the edges $e_2$ and $e_3$ may coincide.
These two cases are shown in Figure~\ref{fig:face-contraction} b) and c), left.
Note that in Figure~\ref{fig:face-contraction} b) the internal domain bordered by the edges $e_2$ and $e_3$
is not a quadrangular face but necessarily contains additional vertices.

The proof is based on the observation, illustrated in Figure~\ref{fig:face-contraction}, that the contracted graph has one less vertex than $Q$, and is contained in the same class $\mathcal{Q}^{0}$. Since any graph with at least three vertices can be contracted, we may reduce $Q$ to the only graph $P_2 \in \mathcal{Q}^{0}$ with two vertices.
\end{proof}

\begin{figure}[ht]
\includegraphics[width=1.1\textwidth]{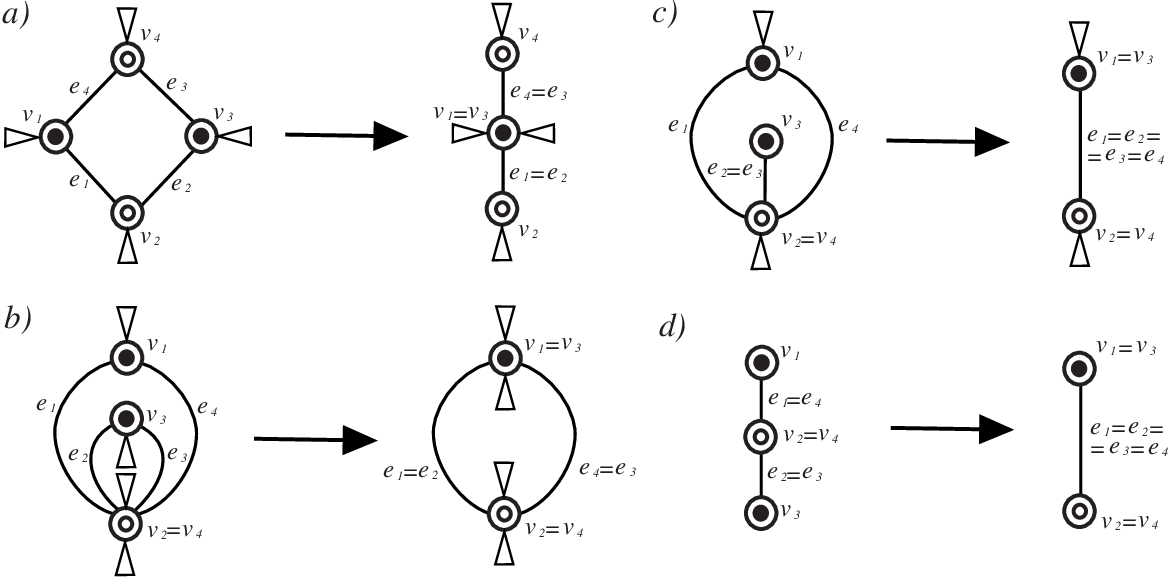}
\caption[]{Contraction of a face of a graph in class ${\mathcal Q}^2$. Like in \cite{Brinkmann}, triangles incident to some vertices indicate that one or more edges \emph{may} occur at that position around the vertex. Here, $v_1$ and $v_3$ are minimum points. The counterpart of the illustrated operation, which removes a maximum point from the graph, can be performed by switching the colors. a) Contraction of a generic face. b) Contraction of a face with a double vertex. c) Contraction of a face with a double edge. d) Contraction of the path graph $P_3$.}
\label{fig:face-contraction}
\end{figure}

\begin{rem}\label{rem:primalsequence}
As any quadrangulation $Q \in \mathcal{Q}^{0}$ can be colored with 2 colors in a unique way, Lemma~\ref{lem:graph_expansion} can be applied for 2-colored quadrangulations, i.e. for quasi-dual representations of Morse-Smale complexes. This process can be extended to primal graph representations as well via the natural identification of the elements of $\mathcal{Q}^{0}$ and ${\mathcal Q}^{3\star}$.
\end{rem}

Since a face contraction can be applied to any face of a quadrangulation, the sequence of graphs from $Q$ to $P_2$ is, in general, not unique.
In special applications additional criteria may be applied to single out one sequence among the combinatorially possible ones.
Face contraction on multigraphs was already used in \cite{Edelsbrunner} and \cite{Dey} to simplify Morse-Smale complexes. Edelsbrunner et al. used the primal representation of the complex in which a face contraction (defined in the quasi-dual representation) emerges as a double \emph{edge contraction} \cite{Diestel}. Their goal was to produce a hierarchy of increasingly coarse Morse-Smale complexes, therefore they applied an extra metric criteria (growing height differences on the edges of the graph) which resulted in an unambiguous sequence of graphs. This algorithm was also used in \cite{Domokos} to identify macroscopically perceptible static equilibrium points of 3D scanned pebbles.
\begin{defn}\label{defn:propstar}
If a convex body $K$, with $C^\infty$-class boundary, satisfies the property that
\begin{itemize}
\item the Euclidean distance function of $K$ is Morse-Smale;
\item $\bd K$ has nonzero principal curvatures at any critical point of this function;
\end{itemize}
then we say that $K$ satisfies property (*).
\end{defn}

Our main lemma is the following.

\begin{lem}\label{lem:main}
There is a convex body $K_0$ satisfying (*) with only one stable and only one unstable equilibrium point.
Furthermore, if $K$ is a convex body satisfying (*) with Morse-Smale graph $Q$, and $Q'$ is obtained from $Q$ via any vertex splitting, then there is a convex body $K'$ satisfying (*), with Morse-Smale graph $Q'$.
\end{lem}

We prove Lemma~\ref{lem:main} in Section~\ref{sec:geometry}.

\begin{proof}[Proof of Theorem~\ref{thm:main}]
Let $Q \in \mathcal{Q}^{3\star}$ be given. Then, by Lemma~\ref{lem:graph_expansion} and Remark~\ref{rem:primalsequence}, there is a sequence $Q_1,Q_2\ldots,Q_n \in \mathcal{Q}^{3\star}$ such that $Q_1 = P_2$, $Q_n = Q$ and for $i=1,2,\ldots,n-1$, $Q_{i+1}$ is obtained from $Q_i$ via a suitable vertex splitting.
Since the only graph in $\mathcal{Q}^{3\star}$ with two vertices is $P_2$, Lemma~\ref{lem:main} states that there is a convex body $K_1$, satisfying (*), with graph $P_2$. Thus applying Lemma~\ref{lem:main} $(n-1)$ times, we obtain that for any $i=2,3,\ldots,n$ (in particular, for $i=n$) there is a convex body $K_i$
satisfying (*) and having graph $Q_i$.
\end{proof}

\section{Proof of Lemma~\ref{lem:main}}\label{sec:geometry}

The proof is based on three lemmas. In their formulations and proofs we regard the Morse-Smale graph of a convex body as the drawing the primal representation of its Morse-Smale complex on the boundary of the body.

\begin{lem}[Smoothing subroutine]\label{lem:smoothing_subroutine}
If $K$ is a convex body with $C^1$-class boundary such that its distance function is Morse-Smale, every equilibrium point of $K$ has a $C^2$-class neighborhood,
and its principal curvatures are positive at any such point, then there is a $C^\infty$-class convex body $K'$ satisfying (*) such that $K$ and its Morse-Smale graph are arbitrarily small perturbations of $K'$ and its Morse-Smale-graph. Furthermore, there is a $C^\infty$-class convex body $K$ satisfying (*) with only one stable and one unstable point.
\end{lem}

\begin{lem}[Spherical neighborhood]\label{lem:spherical_neighborhood}
Let $K$ be a $C^\infty$-class convex body satisfying (*) and let $p$ be any stable or unstable point of $K$. Assume that the center of mass of $K$ is the origin $o$. If $p$ is stable, let $r > ||p||$, and if $p$ is unstable, $0 < r < ||p||$ be arbitrary. Then there is a convex body $K'$ satisfying (*) such that
\begin{itemize}
\item $K'$ and its Morse-Smale graph are arbitrarily small perturbations of $K$ and its Morse-Smale graph, respectively,
\item the stable/unstable point $p'$ of $K$ associated to $p$ has a spherical neighborhood in $\bd K'$ of radius $r'$ arbitrarily close to $r$.
\end{itemize}
\end{lem}

\begin{lem}[Vertex splitting]\label{lem:vertex_splitting}
Let $K$ be a $C^\infty$-class convex body satisfying (*), with Morse-Smale graph $Q \in \mathcal{Q}^{3\star}$. Let $p$ be a stable or unstable point of $K$, and $Q'$ be a graph obtained from $Q$ be splitting the vertex $p$ in any given way. Assume that $p$ has a spherical neighborhood in $\bd K$ with radius $r'$ sufficiently close to some suitable chosen $r$ with $r > ||p||$ if $p$ is stable and $0 < r < ||p||$ if $p$ is unstable.
Then there is a $C^\infty$-class convex body $K'$ with Morse-Smale graph $Q'$ satisfying (*).
\end{lem}

\subsection{Proof of Lemma~\ref{lem:smoothing_subroutine}}\label{subsec:smooth}

Let $o$ denote the center of mass of $K$, and observe that $o \in \inter K$.
We start with a lemma that we use a couple of times in the paper. To state it, for any convex body $K'$ containing $o$ in its interior, we say that $q \in \bd K$ and $q' \in \bd K'$ are \emph{corresponding points}, if $q'= \lambda q$ for some $\lambda > 0$. Furthermore, we let
$F_{K',x}: \bd K' \to \Re$ be the function defined by $q' \mapsto |q'-x|$, and if $x=o$ we write simply $F_{K'}$.

\begin{lem}\label{lem:approx}
Let the equilibrium points of $K$ be $p_1, p_2, \ldots, p_k$, and for $i=1,2,\ldots, k$, let $U_i$ be a closed neighborhood of $p_i$ in $\bd K$
such that $\bd K$ is $C^2$-class in $U_i$ and the eigenvalues of the Hessian $H_K$ of $F_K$ do not change signs in $U_i$.
Then there is an $\varepsilon > 0$ such that for any $x \in \Re^3$ with $|x| < \varepsilon$, the Morse-Smale complex of $K$
with respect to $o$ is homeomorphic to that of any $K'$ with respect to $x$ satisfying the property that:
\begin{enumerate}
\item any pair of corresponding points $q \in \bd K$ and $q' \in \bd K'$ satisfies $|q'-q| < \varepsilon$ and $|\grad F_{K'} (q') - \grad F_K (q)| < \varepsilon$,
\item for any value of $i$, $\bd K'$ is $C^2$-class at any point corresponding to a point $q \in U_i$,
\item for any value of $i$ and any pair of corresponding points $q \in U_i$ and $q' \in \bd K'$, the two eigenvalues of the Hessian $H_{K'}$ at $q'$ differ less than $\varepsilon$ from the two eigenvalues of $H_K$ at $q$, respectively.
\end{enumerate}
\end{lem}

\begin{proof}
For any value of $i$, let $S_i$ be a simple, closed continuous curve in $U_i$ around $p$, transversal to any integral curve of the gradient flow of $K$ intersecting it, and separating $p_i$ in $\bd K$ from any other equilibrium point of $K$.
For simplicity, we assume that $Si = \bd U_i$.
Let $\delta > 0$ satisfy the property that $| \grad F_K (q) | \geq \delta$ for any $q \in (\bd K) \setminus \bigcup_{i=1}^k U_i$, and
the absolute values of the eigenvalues of $H_K$ at any $q \in U_i$, for any value of $i$, are at least $\delta$.

For any convex body $K'$ containing $o$ in its interior, let $S_i'$ and $U_i'$ be the central projections of $S_i$ and $U_i$ from $o$ onto $\bd K'$, respectively. Then there is some $\varepsilon_1 > 0$ such that if $K'$ satisfies the conditions of the lemma with $\varepsilon_1$ in place of $\varepsilon$, $| \grad F_{K'} (q') | \geq \frac{\delta}{2}$ for any $q' \in (\bd K') \setminus \bigcup_{i=1}^k U_i'$, and
the absolute values of the eigenvalues of $H_{K'}$ at any $q' \in U_i'$, for any value of $i$, are at least $\frac{\delta}{2}$.
Since the function $|r-x|$ is a smooth function of $x$, it follows that there is some $0 < \varepsilon_2 \leq \varepsilon_2$ such that
if $K'$ satisfies the conditions of the lemma with $\varepsilon_2$ in place of $\varepsilon$ and $|x| < \varepsilon_2$, then $| \grad F_{K',x} (q') | \geq \frac{\delta}{4}$ for any $q' \in (\bd K') \setminus \bigcup_{i=1}^k U_i'$, and the absolute values of the eigenvalues of the Hessian $H_{K',x}$ at any $q' \in U_i'$, for any value of $i$, are at least $\frac{\delta}{4}$.
Furthermore, we may assume that the signs of the eigenvalues of $H_{K',x}$ in $U_i'$ are equal to the signs of the eigenvalues of $H_K$ in $U_i$, and that at every point of $S_i'$, $\grad F_{K',x}$ points inside if $p_i$ is a stable point, and outside if $p_i$ is unstable.

Thus, each equilibrium point of $K'$, with respect to $x$, is contained in $U_i'$ for some value of $i$, and the type of any equilibrium point in $U_i'$ is the same as that of $p_i$.
Furthermore, as the winding number of $S_i'$ is $1$, if $p_i$ is stable or unstable, then $U_i'$ contains exactly one equilibrium point.

Finally, for any pair $p_i$, $p_j$ connected by an edge in the quasi-dual representation of the Morse-Smale complex of $K$, where $p_i$ is stable and $p_j$ is unstable, let $q_{i,j}$ be a point in $S_i$ such that the integral curve through $q_i$ ends at $q_j$.
Let $0 < \varepsilon \leq \varepsilon_2$ be chosen such that for any such $q_{i,j}$, the integral curve of the gradient flow of any $K'$, with respect to any $x$ with $|x| < \varepsilon$, through the point $q_{i,j}'$ corresponding to $q_{i,j}$, meets $S_j'$. From this it follows that every edge in the quasi-dual representation of the Morse-Smale complex of $K$ corresponds to an edge of the quasi-dual Morse-Smale graph of $K'$ with respect to $x$. Since any quasi-dual graph is a quadrangulation, the graph of $K'$ do not have additional edges. Thus, the graphs of $K$ and $K'$ are homeomorphic.
\end{proof}

Let $f:\Re^3 \to \Re$ denote the \emph{distance function} of $K$; that is, $f$ is defined by
$f(u) = \min \{ \lambda : \lambda \geq 0, u \in \lambda K \}$ (cf. \cite{BF87}).
Let $h:\Re^3 \to \Re$ be a nonnegative, $C^\infty$-class function such that $\supp h \subseteq \B$
and $\int_{\Re^3} h(x) \dif x = 1$.
Such a function is called by various names in the literature: \emph{mollifier} (cf. \cite{E98}),
or \emph{bump function} (cf. \cite{H76}) or \emph{probability distribution}.
Clearly, we may choose $h$ in a way that its symmetry group is $O^3$.
Observe that by setting $h_t (x) = \frac{h(x/t)}{t^3}$, we obtain a family of
$C^\infty$-class functions with $\supp h_t \subseteq t\B$
and $\int_{\Re^3} h_t(x) \dif x = 1$.

Let us define the function $g_t : \Re^3 \to \Re$ as the convolution
\[
g_t(x) = \int_{\Re^3} f(x-y) h_t(y) \dif y .
\]

Clearly, for every $t$, $g_t$ is $C^\infty$-class, and as the integral average of convex functions is
convex, it is convex.
In particular, it follows that the set $K_t = g_t^{-1}([0,1])$ is compact and convex, and hence
it is a convex body for sufficiently small values of $t$.
Furthermore, by \cite[Theorem 2.3]{H76}, $D^k_x(g_t)(z_1,z_2,\ldots,z_k)=\int_{\Re^3} f(y) D^k h_t(x-y) (z_1,z_2,\ldots,z_k) \dif y$
for any $z_1,z_2,\ldots, z_k \in \Re^3$ and $x$ being contained in a neighborhood of $\bd K_t$, where $D^k$ denotes the $k$-linear $k$th derivative functional.
Thus, we may apply the Implicit Function Theorem for $k$ times continuously differentiable functions, from which it follows that $\bd K_t$ is a $C^\infty$-class submanifold of $\Re^n$.
In addition, it also follows (cf. \cite{G02} or \cite{H76}), that if
$U$ is a compact set and $f$ is $C^n$-class on $U$ for some $2 \leq n < \infty$, then on $U$
$g_t$ converges uniformly to $f$, together with its derivatives up to order $n$,
as $t \to 0$.
Thus, by Lemma~\ref{lem:approx},  for sufficiently small values of $t$, that is, for any $t \in [0,\varepsilon]$ for some $\varepsilon > 0$, the Morse-Smale complex of $K_t$, with respect to any point in a fixed neighborhood of $o$ of radius $\rho > 0$, is homeomorphic to that of $K$ with respect to $o$; and $\bd K_t$ has nonzero principal curvatures at the critical points of $K$.
This implies that $K_t$ satisfies (*).
Note that the center $o_t$ of $K_t$ tends to $o$ as $t \to 0$.
Thus, if $t$ is sufficiently small, $| o_t | < \rho$.
Choosing $t$ with this property and setting $K'=K_t$, the assertion readily follows.

To prove the second part, we apply our method for the mono-monostatic convex body $K_0$ constructed in \cite{Varkonyi}.
This body is $C^1$-class at every boundary point, and, apart from the two equilibrium points, it is $C^2$-class.
Furthermore, at the two equilibrium points the one-sided curvatures in every normal section are positive. Thus, applying our method yields
the assertion.

\begin{rem}\label{rem:smoothing_sphere}
Note that as the symmetry group of $h$ is $O^3$, if $p \in \bd K$ has a spherical cap neighborhood, then so does the corresponding point of $K'$.
\end{rem}

\subsection{Proof of Lemma~\ref{lem:spherical_neighborhood}}\label{subsec:sphere}

Let $o$ be the center of mass of $K$, and assume that $p=(0,0,1)$.
Then the condition that $p$ is nondegenerate is equivalent to saying that
$\kappa_1 \neq 1 \neq \kappa_2$, where $\kappa_1$ and $\kappa_2$ are the two principal
curvatures of $\bd K$ at $p$.

Now we define a two-parameter family of truncations of $K$, denoted by $K_R(\varepsilon)$.
Let $z=f(x,y)$ be a function the graph of which is a neighborhood of $p$ in $\bd K$.
Furthermore, for any $\phi \in [0,\pi]$, let $f_L(t)=f(t\cos \phi,t\sin \phi)$ be the restriction
of $f$ to the line $L$ containing $(\cos \phi, \sin \phi)$ and the origin.
Consider a value of $R$ such that $\frac{1}{R}$ is strictly smaller than
any of the two principal curvatures of $\bd K$ at $p$.
Then $\frac{1}{R} < |f''_L(0)|$ for any line $L$.
Since $\bd K$ is $C^\infty$-class in a neighborhood of $p$, so is $f$ in a neighborhood of $(0,0)$.
Thus, for some $\delta > 0$, $\frac{1}{R} < f''_L(t)$ for any $|t| \leq \delta$.

We choose $\delta$ satisfying this condition, and consider the part $S$, with points for the coordinates of which $\sqrt{x^2+y^2} \leq \delta$ and $z > 0$, of the sphere of radius $R$ that touches the plane $\{ z=1 \}$
at $p$ from the side containing the origin. Clearly, $S$ is a closed spherical cap.
Now, set $C_R$ as the intersection of all closed half spaces that contain $S$ and with their boundaries tangent to $S$.
Then $C_R$ is a `cone with a rounded apex'.
Note that $C_R$ is convex, contains $K$, and that $(\bd C_R) \cap K = \{ p \}$.
Furthermore, the boundary of the translate $C_R(\varepsilon)$,
with a sufficiently small vector $(0,0,-\varepsilon)$ where $\varepsilon > 0$,
intersects $\bd K$ in a simple smooth closed curve contained in the translate of the
interior of the spherical cap $S$; here, the fact that the intersection is a simple closed curve is shown by the fact that the planar section of the boundary of the translate with any plane through $[o,p]$ intersects the corresponding planar section of $\bd K$ at two distinct points (cf. Figure~\ref{fig:cutting}).
Our first lemma describes the properties of the intersection $K_R(\varepsilon) = C_R(\varepsilon) \cap K$.

\begin{figure}[ht]
\includegraphics[width=0.45\textwidth]{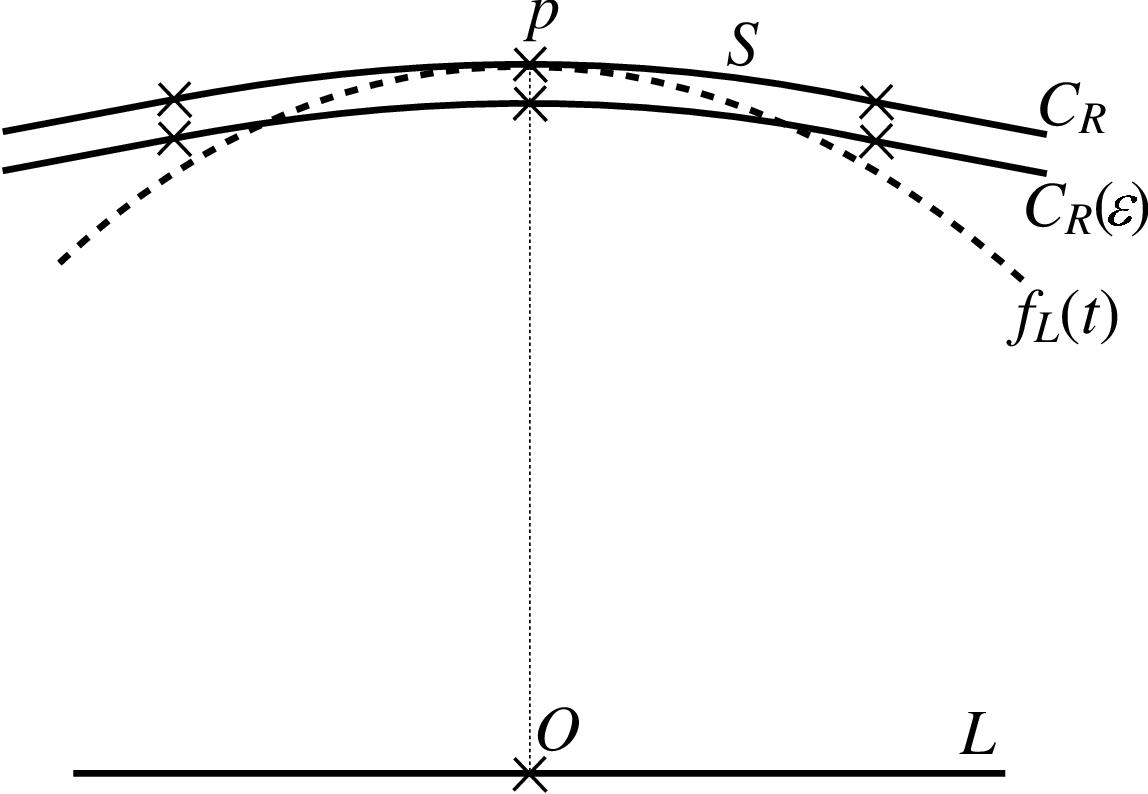}
\caption[]{Truncating $K$ with $C_R(\varepsilon)$}
\label{fig:cutting}
\end{figure}

\begin{lem}\label{lem:truncationbysphere}
Let $\frac{1}{R}$ be smaller than any of the two principal curvatures of $\bd K$ at $p$.
Then there are constants $\lambda_1, \lambda_2, \lambda_3 > 0$ depending only on $R$ and $K$ such that
for every sufficiently small $\varepsilon > 0$ the following hold.
\begin{itemize}
\item[(1)] no point of $(\bd K) \cap \B_{\lambda_1 \sqrt{\varepsilon}}(p)$ belongs to $K_R(\varepsilon)$.
\item[(2)] $(\bd K) \setminus \B_{\lambda_2 \sqrt{\varepsilon}}(p) \subseteq K_R(\varepsilon)$.
\item[(3)] If $D$ is a convex body, with center $o'$ and satisfying $K_R(\varepsilon) \subseteq D \subseteq K$,
then $|o'| \leq \lambda_3 \varepsilon^2$.
\end{itemize}
\end{lem}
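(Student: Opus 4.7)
The plan is to work in a local chart around $p=(0,0,1)$ in which $\bd K$ is the graph $z=f(x,y)$ with $f(0,0)=1$ and $\nabla f(0,0)=0$, and the spherical cap part of $\bd C_R$ is the graph $z=g(x,y)$. Convexity of $K$ together with property~(*) forces both principal curvatures $\kappa_1,\kappa_2$ of $\bd K$ at $p$ to be strictly positive, and in principal axes
\[
g(x,y)-f(x,y) = \tfrac12\bigl((\kappa_1-1/R)x^2 + (\kappa_2-1/R)y^2\bigr) + O(r^3),\qquad r=\sqrt{x^2+y^2}.
\]
The hypothesis $1/R<\kappa_1,\kappa_2$ then gives constants $0<c\le C$, depending only on $R$ and $K$, with $cr^2\le g(x,y)-f(x,y)\le Cr^2$ on a fixed neighborhood $U$ of the origin. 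Since $\bd C_R(\varepsilon)$ is locally the graph $z=g(x,y)-\varepsilon$, the point $(x,y,f(x,y))\in\bd K$ lies outside $C_R(\varepsilon)$ precisely when $g(x,y)-f(x,y)<\varepsilon$, and its distance to $p$ is comparable to $r$ for small $r$.

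Parts (1) and (2) reduce to one-line quadratic estimates. For (1), any point of $\bd K$ in $\B_{\lambda_1\sqrt\varepsilon}(p)$ satisfies $r\le\lambda_1\sqrt\varepsilon$ and hence $g-f\le C\lambda_1^2\varepsilon$; choosing $\lambda_1<1/\sqrt C$ makes this strictly less than $\varepsilon$ once the cubic remainder is absorbed, so the point lies outside $C_R(\varepsilon)$ and hence outside $K_R(\varepsilon)$. Let $N\subseteq\bd K$ be the closed neighborhood of $p$ lying over $U$. For (2), a point of $\bd K\cap N$ at distance at least $\lambda_2\sqrt\varepsilon$ from $p$ has $r$ comparable to that distance, and taking $\lambda_2$ large in terms of $c$ forces $g-f>\varepsilon$, placing the point in $C_R(\varepsilon)\cap K=K_R(\varepsilon)$. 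For $q\in\bd K\setminus N$, the fact that $(\bd C_R)\cap K=\{p\}$ together with compactness of $\bd K\setminus N$ yields a uniform bound $\dist(q,\bd C_R)\ge \mu>0$, so $q\in C_R(\varepsilon)$ whenever $\varepsilon<\mu$.

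For (3), the key step is the volume estimate $\mathrm{vol}(K\setminus K_R(\varepsilon))=O(\varepsilon^2)$. Locally the cap is the region $g(x,y)-\varepsilon<z\le f(x,y)$ over the ellipse-like domain $\{g-f<\varepsilon\}$; after the rescaling $(x,y)=\sqrt\varepsilon(u,v)$ the domain becomes a bounded region of area independent of $\varepsilon$, the height is $O(\varepsilon)$, and the Jacobian contributes an extra factor of $\varepsilon$, producing the desired $\varepsilon^2$ order. Since $o=0$ is the centre of gravity of $K$, $\int_K x\dif x=0$, and for any admissible $D$
\[
o'\cdot\mathrm{vol}(D) \;=\; \int_D x\dif x \;=\; -\int_{K\setminus D} x\dif x.
\]
The inclusion $K\setminus D\subseteq K\setminus K_R(\varepsilon)$ bounds the right side in norm by $\diam(K)\cdot\mathrm{vol}(K\setminus K_R(\varepsilon))=O(\varepsilon^2)$, and $\mathrm{vol}(D)\ge\mathrm{vol}(K)/2$ for small $\varepsilon$ gives $|o'|\le\lambda_3\varepsilon^2$. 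The main obstacle is the uniformity of $\lambda_1,\lambda_2,\lambda_3$ in $\varepsilon$: one must fix the chart neighborhood $U$ independently of $\varepsilon$ so that the cubic remainders in the Taylor expansions of $f$ and $g$ are controlled uniformly, and then smoothly glue the local curvature estimate to the compactness estimate across the boundary of $N$ in part~(2).
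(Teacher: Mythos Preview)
Your proof is correct and follows essentially the same approach as the paper: a second-order Taylor comparison of the graph of $\bd K$ against the spherical cap to obtain two-sided quadratic bounds on $g-f$ (the paper phrases this along lines $f_L$ versus a semicircle $h$), and then for (3) the $O(\varepsilon^2)$ volume estimate for the removed cap combined with the center-of-mass integral. Your treatment is in fact a bit more careful than the paper's—you explicitly handle points of $\bd K$ outside the local chart via compactness in (2), and your vector identity $o'\,\mathrm{vol}(D)=-\int_{K\setminus D}x\,\mathrm{d}x$ is cleaner than the coordinate-wise brick estimate—but the ideas are the same.
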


\begin{proof}
First, observe that $K_R(\varepsilon)$ is convex, as it is the intersection of convex bodies.

Let $f$ be the two-variable function whose graph describes $\bd K$ near $p=(0,0,1)$.
Like there, let $f|_L : \Re \to \Re$ denote its restriction $f|_L (t) = f(t \cos \phi, t \sin \phi)$,
where $ \phi \in [ 0, \pi ]$, on a line $L$ passing through the origin and $(\cos \phi, \sin \phi)$.
Let $h$ denote the one-variable function, defined on $[-R,R]$, the graph of which is a semicircle, of radius $R$, and with maximum $h(0)=1-\varepsilon$.
By the conditions in Lemma~\ref{lem:truncationbysphere}, we have $0 > h''(0) > f|_L''(0)$.
Hence, from the second-degree Taylor polynomials of $f|_L$ and $h$, we obtain that
\[
\varepsilon - \lambda_1^\star t^2 < f_L(t)-h(t) < \varepsilon - \lambda_2^\star t^2
\]
for some positive constants $\lambda_1^\star, \lambda_2^\star$.
Thus, $(1)$ and $(2)$ clearly follow with $\lambda_1 = \frac{1}{\lambda_1^\star}$ and
$\lambda_2 = \frac{1}{\lambda_2^\star}$.

Now we show (3). Let $o'=(x',y',z')$.
Recall that
\[
o' = \frac{\int_{q \in D} q \dif V}{\int_{q \in D} 1 \dif V} .
\]

First, we estimate $x'$.
Note that the part of $K$ outside $D$ can be covered by an axis-parallel brick of side-lengths
$2\lambda_2 \sqrt{\varepsilon}, 2\lambda_2\sqrt{\varepsilon}$ and $c \varepsilon>0$
for some constant $c$ independent of $\varepsilon$.
Thus,
\[
\left| \int_{q \in D} 1 \dif V - \int_{q \in K} 1 \dif V \right| \leq 4\lambda_2 ^2 c \varepsilon^2,
\]
and
\[
\left| \int_{q \in D} x \dif V - \int_{q \in K} x \dif V \right| \leq 4\lambda_2^3 c \varepsilon^{5/2},
\]
from which $|x'| \leq \lambda_3'\varepsilon^2$ follows.
In the same way, we may obtain similar bounds for $y'$ and $z'$, which readily yields the assertion.
\end{proof}

We note that Lemma~\ref{lem:truncationbysphere} can be applied for the degenerate case $R=\infty$ as well.
The next lemma guaranties that the truncated body $K_R(\varepsilon)$ has the same numbers and types of static equilibrium points.
The proof is based on the idea presented in the proof of Lemma~\ref{lem:approx}.

\begin{lem}\label{lem:truncation_contEQpoints}
Let $p$ be a stable point of $K$, or let $p$ be an unstable point and $R < ||p||$.
If $R$ satisfies the conditions in Lemma~\ref{lem:truncationbysphere},
then there is an $\varepsilon > 0$ such that for every $t \in [0,\varepsilon]$ the following holds:
$K_R(t)$ and $K=K_R(0)$ have the same number of stable/unstable and saddle points. Furthermore,
the coordinates of these points are continuous functions of $t$.
\end{lem}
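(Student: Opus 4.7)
The plan is to track the critical points of the Euclidean distance function from the center $o_t$ of $K_R(t)$, restricted to $\bd K_R(t)$, and to split the analysis into three regions: the unmodified part of $\bd K$, the spherical cap that replaces the cut-off region, and the ridge where they meet. By Lemma~\ref{lem:truncationbysphere}, $|o_t| \leq \lambda_3 t^2$ and $\bd K_R(t)$ coincides with $\bd K$ outside $\B_{\lambda_2 \sqrt{t}}(p)$; inside it is a portion of the sphere of radius $R$ centered at $c_t = (0,0,1-R-t)$. Choosing $\varepsilon$ sufficiently small will make all the following perturbations simultaneously controllable.

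For the critical points of $K$ other than $p$---in particular every saddle, since $p$ is stable or unstable---the nondegeneracy from property (*) allows me to apply the implicit function theorem to the critical-point equation at each pair $(o, p_i)$, producing a smooth family $q_i(w)$ of critical points of the same Morse index as $p_i$, for $w$ near $o$. Setting $w = o_t$ yields a continuous family on $\bd K \cap \bd K_R(t)$. A compactness argument (combining the nondegeneracy, so that no critical points accumulate, with continuity of the gradient in $w$) shows that these $q_i(t)$ account for \emph{all} critical points on the unmodified part of the boundary, preserving both numbers and types.

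On the spherical cap, a point $q$ is critical for the distance from $w$ precisely when $w$, $c_t$ and $q$ are collinear, giving at most two candidates. For $t$ small, the line through $o_t$ and $c_t$ is nearly the $z$-axis, and only its upper intersection with the sphere---near $(0,0,1-t)$---lies in the cap, yielding one continuously varying critical point $p(t)$. Its type hinges on the sign of $R - ||p||$. If $p$ is stable, then $\kappa_i < 1/||p||$ together with $1/R < \kappa_i$ forces $R > ||p||$, so $c_t$ lies \emph{below} $o_t$ and $p(t)$ is the nearer of the two line-sphere intersections to $o_t$, a local minimum of the distance restricted to the sphere, hence stable. If $p$ is unstable, the hypothesis $R < ||p||$ places $c_t$ \emph{above} $o_t$, making $p(t)$ the farther intersection, a local maximum, hence unstable. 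In either case $p(t)$ inherits the type of $p$.

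Finally, the ridge $\Gamma_t = \bd K \cap \bd C_R(t)$ sits in the annulus $\lambda_1 \sqrt{t} \leq |x-p| \leq \lambda_2 \sqrt{t}$, where neither smooth side can carry critical points for $t$ small: on the original side the tangential gradient of the distance from $o$ is bounded away from zero on a punctured neighborhood of $p$ (since $p$ is the only critical point there) and $o_t$ is close to $o$; on the cap side the only critical point $p(t)$ is pinned near the apex, well inside $S_t$. Assembling the three regions delivers a continuous, type-preserving bijection between the critical points of $K = K_R(0)$ and those of $K_R(t)$, which is exactly the claim. I expect the main obstacle to be the type analysis on the cap: determining which of the two candidate sphere-intersections sits in the cap, and whether it is a minimum or maximum of the distance from $o_t$, requires tracking the sign of $R - ||p||$, and it is exactly at this point that the distinction between the stable and unstable hypotheses of the lemma is used.
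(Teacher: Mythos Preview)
Your argument follows essentially the same three-region decomposition as the paper (unmodified boundary, spherical cap, ridge), tracking equilibria under the perturbation $o \mapsto o_t$; where you invoke the implicit function theorem, the paper uses the Poincar\'e--Hopf theorem, and the paper likewise defers the ridge analysis to the reader. One point to tighten: showing that neither smooth piece has a critical point \emph{near} the ridge does not by itself exclude equilibria \emph{on} the non-smooth ridge curve, since an equilibrium at such a point $q$ requires only that $q - o_t$ lie in the normal cone spanned by the two outward normals $n_K(q)$ and $n_S(q)$---the curvature inequalities you already have ($1/R < \kappa_i$ together with $\kappa_i < 1/\|p\|$ in the stable case, respectively $1/R > 1/\|p\|$ in the unstable case) are exactly what force $q - o_t$ to fall outside that cone, and this should be made explicit.
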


\begin{proof}
Note that as the Euclidean distance function from a fixed point is $C^\infty$-class at every point but the origin,
all the partials of any order at any point of $\bd K$ change continuously when translating $K$.
Hence, applying the Poincar\'e-Hopf Theorem \cite{Arnold} to a compact neighborhood of any equilibrium point
shows that the numbers of the stable/unstable/saddle points of $K$ do not change under a translation by a small vector,
and their coordinates are continuous functions of the translation vector.
Similarly, the gradient vector field changes continuously under a translation of $K$.
Thus, $o$ has a neighborhood $U$ such that for any $u \in U$, the Morse-Smale complex
of $K$ is homeomorphic to that of $K+u$, or, in other words, the Morse-Smale complex of $K$
with respect to $u$ is homeomorphic to that with respect to $o$.

If $t$ is sufficiently small, then the center $o_t$ of $K_R(t)$ is contained in $U$.
Hence, all the stable/unstable and saddle points of $K$ but $p$ change continuously as functions of $t$.
Note that by (1) and (3) of Lemma~\ref{lem:truncationbysphere},
the stable/unstable point of $K$, with respect to $o_t$, that corresponds to $p$
is contained in the part truncated by $C_R(t)$.
Thus, this point does not belong to $K_R(t)$.
On the other hand, by (2) of the same lemma, there is exactly one equilibrium point of $K_R(t)$
on the part belonging to $C_R(t)$.

Finally, it can be seen geometrically that no plane, perpendicular to $[u,q]$ for some $u \in V$,
supports $K_R(t)$ at a point $q \in (\bd K) \cap  \bd C_R(t)$, and thus, there are no more equilibrium points of $K_R(t)$.
\end{proof}

Now we show that there is a value of $R$ for which the assertion in Lemma~\ref{lem:spherical_neighborhood} holds.
To do this, consider some $K_R(\varepsilon)$, where $\varepsilon > 0$ and $R>0$
satisfy the conditions of Lemma~\ref{lem:truncation_contEQpoints}.
By this lemma, we may assume that $K_R(\varepsilon)$ has the same numbers of stable/unstable and saddle
points as $K$ does.
Let $o_R$ denote the center of $K_R(\varepsilon)$, and $p_R$ be the critical point of $K_R(\varepsilon)$
corresponding to $p$.
Consider a simple closed $C^1$-class curve $g \subset \bd K_R(\varepsilon)$, separating $p_R$ from
all the other critical points of $K_R(\varepsilon)$, which is disjoint from the truncated,
spherical part, and is \emph{not} tangent to any integral
curve of $\bd K$, with respect to $o_R$, that intersects $g$.
Without loss of generality, we may assume that the central projection of $g$ onto the unit sphere $\S^2$ from $o$ is also a simple, closed, $C^1$-class curve.

Now we show that even though $\bd K_R(\varepsilon)$ has nonsmooth boundary and thus cannot have an associated Morse-Smale complex,
if we smooth $K_R(\varepsilon)$ using the function $h_t$ from the proof of Lemma~\ref{lem:smoothing_subroutine}, then
the Morse-Smale complex of the resulting body $K'$ is homeomorphic to that of $K$.
Let $g'$ be the central projection of $g$ from $o$ to $\bd K'$.
Observe that as the gradient vector field changes continuously as a function of $t$,
for sufficiently small $t$ $g'$ is not tangent to any integral curve of $K'$ intersecting it.
Furthermore, any such integral curve ends at a critical point in the region inside $g'$
and for small values of $\varepsilon$ and $t$ there is a unique critical point in this region, which we denote by $p'$.
Thus, we may choose values of $\varepsilon$ and $t$ such that the Morse-Smale complex of $K'$
is homeomorphic to that of $K$.
By Remark~\ref{rem:smoothing_sphere}, $p'$ has a spherical neighborhood, and the radius of this sphere
is arbitrarily close to $R$.

As a result of our consideration, we may assume that for \emph{some} value of $R$, the examined critical point $p$ has
a spherical cap neighborhood in $\bd K$ of radius $R$.
In the last lemma of the subsection, we show that this truncation can be carried out using a sphere of essentially any radius.
To formulate it, recall that the integral curves connecting $p$ to a saddle point of $K$ are denoted by $\Gamma_1, \Gamma_2, \ldots, \Gamma_k = \Gamma_0$ in cyclic order around $p$.

\begin{lem}\label{lem:smallradius}
Let $K$ satisfy (*), and assume that $p$ has a spherical cap neighborhood in $\bd K$, of radius $R$.
Let $0 < r < ||p||$ if $p$ is an unstable point, and $r > ||p||$ if $p$ is a stable point,
and let $\delta > 0$ arbitrary.
Then there is a convex body $K' \subseteq K$ satisfying (*) and the following.
\begin{itemize}
\item The Morse-Smale complex of $K'$ is homeomorphic to that of $K$.
\item Denoting the critical point of $K'$ corresponding to $p$ by $p'$, $p'$ has a spherical cap neighborhood
in $\bd K'$, of radius arbitrarily close to $r$.
\item Denoting the integral curve of $K'$ corresponding to $\Gamma_i$ by $\Gamma'_i$ for every $i$, and
by $t'_i$ the unit tangent vector of $\Gamma'_i$ at $p'$, we have that $||t'_i - t_i || < \delta$.
\end{itemize}
\end{lem}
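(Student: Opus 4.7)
The plan is to construct $K'$ as an intersection $K \cap C_r(\varepsilon)$, where $C_r$ is the ``rounded cone'' associated with a sphere of radius $r$ tangent from below to a horizontal plane at an apex point $p' \approx p$ (shifted slightly below $p$), and $C_r(\varepsilon)$ is the downward translation by a small $\varepsilon > 0$, in the spirit of the construction preceding Lemma \ref{lem:truncationbysphere}. The truncation is designed to replace the current spherical cap of radius $R$ near $p$ by a cap of radius $r$ near $p'$.

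In the case $r > R$ the hypothesis of Lemma \ref{lem:truncationbysphere} is satisfied, since $1/r < 1/R = \kappa_1 = \kappa_2$. Hence the conclusion follows by combining Lemma \ref{lem:truncationbysphere} and Lemma \ref{lem:truncation_contEQpoints} with the smoothing Subroutine $S$, using Remark \ref{rem:smoothing_sphere} to preserve the spherical cap. In particular this covers unstable $p$ with $R < r < \|p\|$ and stable $p$ with $r > R$.

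The main obstacle is the case $r < R$, which occurs when $p$ is stable with $\|p\| < r < R$ (so the cap radius must \emph{decrease} toward $\|p\|$ for sensitivity). Here Lemma \ref{lem:truncationbysphere} does not apply directly. The key geometric observation I would use is that, for $r < R$ and $\varepsilon > 0$ sufficiently small (in particular $\varepsilon < R-r$), the spherical cap of $C_r(\varepsilon)$ lies horizontally strictly inside the cap of $K$ at every height between $p'$ and the cap's boundary; an elementary comparison of the two sphere equations yields
\[
\sqrt{2rh - h^2} < \sqrt{2R(\varepsilon+h) - (\varepsilon+h)^2}
\]
for all small $h > 0$. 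Consequently, in a neighborhood of $p'$ the boundary of $K \cap C_r(\varepsilon)$ coincides with the cap of $C_r(\varepsilon)$, of radius $r$, and convexity of the intersection is automatic.

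Having constructed $K'$, three verifications remain: (i) the tangent cone of $C_r(\varepsilon)$ meets $\bd K$ in a single smooth simple closed curve just below the cap, introducing no additional critical points, which I expect to arrange by choosing the cap's angular extent $\delta$ small compared with $\varepsilon$ so the cone is steep enough to cleanly re-enter the part of $\bd K$ that is still the original cap of radius $R$; (ii) the Morse-Smale complex of $K'$ is homeomorphic to that of $K$, which follows from a Poincar\'e--Hopf / continuity argument in the spirit of the proof of Lemma \ref{lem:truncation_contEQpoints}, using that the center of $K'$ approaches that of $K$ as $\varepsilon \to 0$; and (iii) the unit tangent vectors $t'_i$ at $p'$ approximate $t_i$ within $\delta$, by the continuous dependence of the gradient vector field of the distance function on the truncation parameter $\varepsilon$. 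Finally, Subroutine $S$ smooths the non-$C^\infty$ transition along the junction of the cap, the tangent cone, and the rest of $\bd K$, preserving the spherical cap of radius arbitrarily close to $r$ at $p'$ by Remark \ref{rem:smoothing_sphere}.
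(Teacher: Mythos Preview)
Your treatment of the case $r>R$ matches the paper's: both simply reuse the sphere--truncation of Step~$T_1$ (Lemmas~\ref{lem:truncationbysphere} and~\ref{lem:truncation_contEQpoints}) followed by Subroutine~$S$.

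For $r<R$ your route is genuinely different. The paper does \emph{not} attempt to intersect with a rounded cone here; instead it keeps $p'=p$ and \emph{replaces} the (already rotationally symmetric) spherical patch of $\bd K$ around $p$ by a hand--built $C^1$ surface of rotation $S'$ that (i) contains a cap of radius $r$ at $p$, (ii) joins $\bd K$ in a $C^1$ way at the boundary of $\B_\varepsilon(p)$, and (iii) has no equilibrium on $S'$ other than $p$ with respect to any center on the axis. Because the replacement is rotationally symmetric and $K$ already has its center on the axis, the new center stays on the axis, so the integral curves inside $S'$ are meridians; this is what gives the clean control over the tangent directions $t_i'$. Your intersection $K\cap C_r(\varepsilon)$ can also be made to work and shares the key feature (a rotationally symmetric removal inside the existing cap, hence center stays on the axis), but it produces a genuine $C^0$ edge along the cone--$\bd K$ junction and requires the extra locality/no-spurious-equilibrium checks that the paper's $C^1$ gluing avoids by fiat. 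In short: the paper's argument is shorter and sidesteps the edge analysis; yours is more uniform with Step~$T_1$ but leaves more to verify.

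One terminological slip is worth flagging: making the cap's extent small does \emph{not} make the tangent cone ``steep''---it makes it nearly horizontal (half--angle close to $\pi/2$). That is in fact what you want, since a nearly horizontal cone is quickly overtaken by the downward--curving sphere of radius $R$, so that the cone exits $\bd K$ at $\rho^\star\approx\sqrt{2R\varepsilon}$ and the modification stays inside the original spherical neighborhood. Your heuristic ``$\delta$ small compared with $\varepsilon$'' points to the right regime, but the justification you give (steepness) is the opposite of the actual mechanism, and you still owe the check that no equilibrium appears on the conical annulus or on the residual edge before smoothing (the paper's surface-of-rotation construction absorbs these checks into the list of properties of $S'$).
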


\begin{proof}
First, let $r < R$. Then we set $p'=p$.
Let $U$ be a (sufficiently small) neighborhood of $o$ such that
for every $u \in U$, the Morse-Smale complex of $K$ with respect to $u$ is homeomorphic
to that with respect to $o$. By (3) of Lemma~\ref{lem:truncationbysphere},
there is an $\varepsilon > 0$ such that for any convex body $K'$, satisfying
$(K \setminus \B_\varepsilon(p)) \subseteq K' \subseteq K$,
the center of $K'$ is contained in $U$.

Clearly, we may replace the part of $\bd K$ in $\B_\varepsilon(p)$ with a surface of rotation $S'$, with the line containing $[o,p]$ as its axis of symmetry, in such a way that:
\begin{itemize}
\item $p \in S' \subset K$ and $(\bd K \setminus \B_\varepsilon(p)) \cup S'$ is the boundary of a convex body $K'$,
\item the modified body $K'$ has a $C^1$-class boundary,
\item a neighborhood of $p$ in $S'$ belongs to a sphere of radius $r$,
\item there is no equilibrium point on $S'$ but $p$, with respect to any point of $U$ on the line containing $[o,p]$.
\end{itemize}

Since $p$ has a spherical neighborhood in $\bd K$, the center of $K'$ lies on the line containing $[o,p]$, and thus all the integral curves emanating from
$p$ in $\B_\varepsilon(p)$ are the meridians of $S'$.
Let these curves be denoted by $\Gamma'_1, \ldots, \Gamma'_k = \Gamma'_0$ in cyclic order, with $t'_i$ denoting
the unit tangent vector of $\Gamma'_i$ at $p$.
Thus, since the integral curves of $K$ with respect to $u \in U$, connecting $p$ and the saddle points of $K$
are deformed continuously when modifying $u$, the assertion readily follows for sufficiently small $U$. To obtain a convex body with a $C^\infty$-class boundary, we may finally apply Lemma~\ref{lem:smoothing_subroutine}.

If $r > R$, we may use a truncation by a sphere of radius $r$, like in the previous part of the subsection.
\end{proof}


\subsection{Proof of Lemma~\ref{lem:vertex_splitting}}\label{subsec:splitting}

Examples of geometric truncations generating some of the combinatorially possible vertex splittings are shown in Figure~\ref{fig:splitting}.
Note that in a vertex splitting, the edges around the vertex to be split are partitioned into two parts (one of which is possibly empty), in cyclic order, 
such that edges in the same part end up at the same split vertex (cf. Figure~\ref{fig:splitting}, panel (a)).

\begin{figure}[ht]
\includegraphics[width=0.7\textwidth]{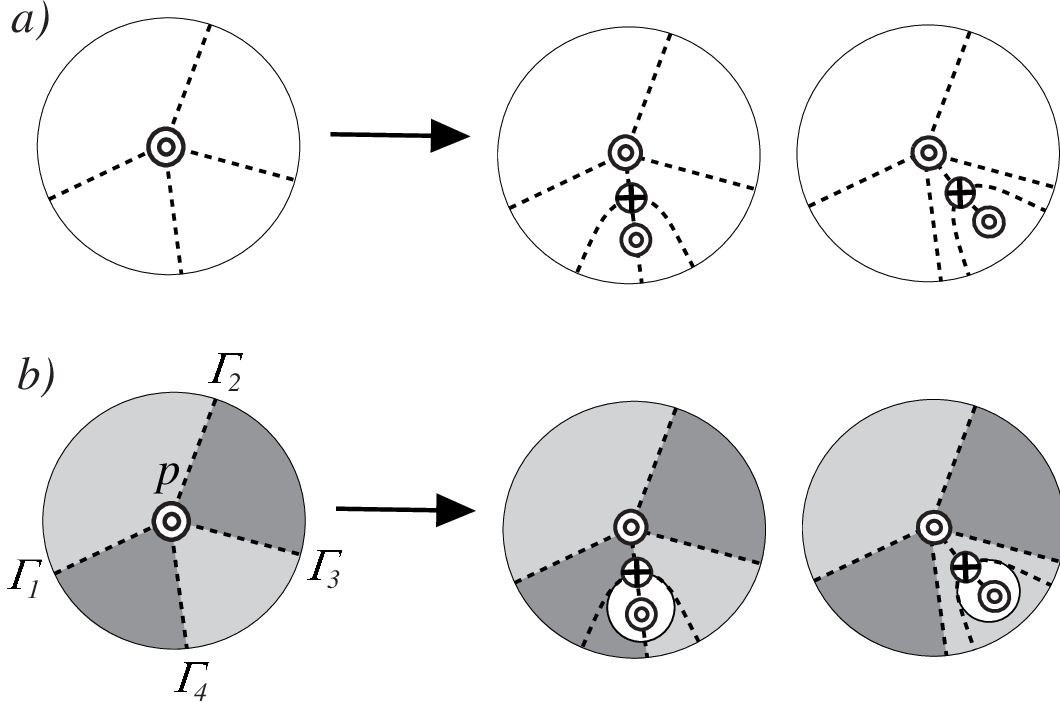}
\caption[]{Vertex splittings at vertex $p$. Here, $p$ is a minimum point (of course, $p$ could also be a maximum).
a) Vertex splitting in the primal representation in class ${\mathcal Q^{3\star}}$ ($d(v)=4$)
b) Geometric realization with a truncation by a plane}
\label{fig:splitting}
\end{figure}

First, we consider the case that  vertex $p$ to be split is a stable point of $K$. The truncation is illustrated in Figure~\ref{fig:geometry}.

\begin{figure}[ht]
\includegraphics[width=1.1\textwidth]{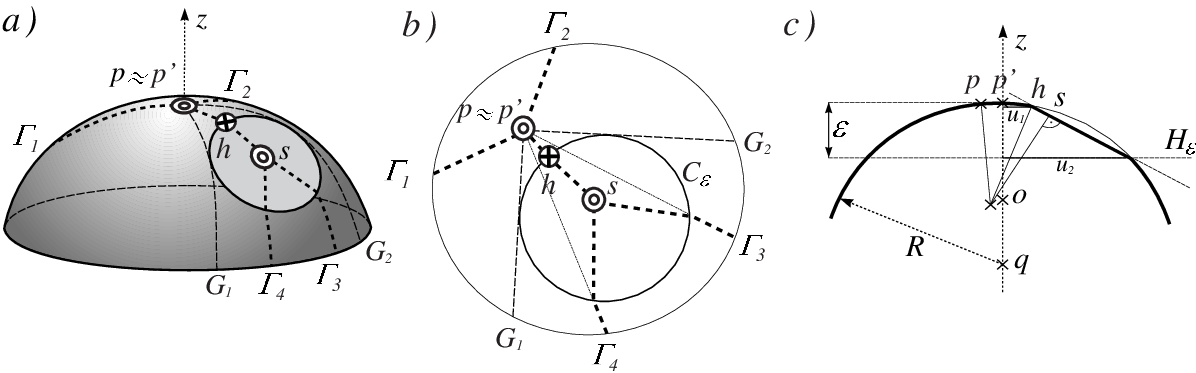}
\caption[]{Truncation with a plane}
\label{fig:geometry}
\end{figure}

Let the center of mass of $K$ be the origin, and set $p=(0,0,1)$.  According to the formulation of Lemma~\ref{lem:spherical_neighborhood}, we may choose the radius $r'$ of the spherical neighborhood of $p$ arbitrarily close to any given value $r > ||p||=1$. We choose the value of $r$ later.

Let the integral curves of $K$, connecting $p$ and the saddle points of $K$, in cyclic order around $p$, be $\Gamma_i$, where $i=1,2,\ldots,k$, and the unit tangent vector of $\Gamma_i$ at $p$ is $t_i$.
Let $R$ and $q$ denote the radius and the center of the sphere, containing a spherical cap neighborhood of $p$.
We denote this neighborhood by $S$.
Then, according to Lemma~\ref{lem:spherical_neighborhood}, we may assume that $R$ is arbitrarily close to any given number greater than $||p||$.
Thus, the arc of any integral curve of $K$ within $S$, including the $\Gamma_i$s, are great circle arcs.

Let $G_1$ and $G_2$ be two open great circle arcs in $S$, starting at $p$, that are in the interiors of the two regions (in the degenerate case, on region), bounded by the $\Gamma_i$s, that we want to split.
These different regions are shaded in Figure~\ref{fig:splitting} b).
We may choose $G_1$ and $G_2$ in a way that they do not belong to the same great circle.
We intend to truncate $K$ near $p$ with a plane in such a way that there is a new stable point on the plane, a new saddle point on its boundary, and the saddle point is connected to the stable point corresponding to $p$, to the stable
point on the plane and to the two unstable points in the boundaries of the regions containing $G_1$ and $G_2$.
To do this, we use a planar section that touches $G_1$ and $G_2$.
To examine the properties of these sections, we first prove a technical lemma that we are going to use in the construction.

\begin{lem}\label{lem:limit}
Let $C$ be the unit circle in the plane $\Re^2$ centered at the origin. Let $p=(0,t)$ with $t>0$.
Let $q_1=(u_1,v_1)$ and $q_2=(u_2,v_2)$ be two points of $C$ such that $v_1>0$.
\begin{itemize}
\item[(1)] If $q_2$ is defined by the property, as a function of $q_1$, that $[q_1,q_2]$ is perpendicular to $[p,q_1]$,
then $\lim\limits_{u_1 \to 0} \frac{u_2}{u_1} = \frac{1+t}{1-t}$.
\item[(2)] If $q_2$ is defined by the property, as a function of $q_1$, that the angle of $[q_1,q_2]$ and $[p,q_1]$ is $\frac{\pi}{2}-c u_1$ for some fixed constant $c$ independent of $u_1$, then $\lim\limits_{u_1 \to 0} \frac{u_2}{u_1} = \frac{1+t}{1-t} + 2c$.
\end{itemize}
\end{lem}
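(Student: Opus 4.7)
This is a plane-geometry calculation, and my strategy is to derive $u_2$ as an explicit rational function of $u_1$, $v_1$, $t$ (and, in part (2), $c$), then expand as $u_1 \to 0$.

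For part (1), the cleanest route is Thales' theorem. Let $q_1'$ be the second intersection of the line through $p$ and $q_1$ with $C$. Since the chords $[q_1, q_1']$ and $[q_1, q_2]$ of $C$ meet at a right angle at $q_1$, the converse of Thales' theorem forces $[q_1', q_2]$ to be a diameter of $C$; hence $q_2 = -q_1'$. To find $q_1'$ I would parametrize the line as $p + s(q_1 - p)$ and intersect with $u^2 + v^2 = 1$: the resulting quadratic in $s$ has the obvious root $s=1$, so Vieta's formula gives the second root and therefore an explicit closed form for $u_2$ as a rational function of $u_1, v_1, t$. Substituting $v_1 = \sqrt{1 - u_1^2}$ and letting $u_1 \to 0$ (so $v_1 \to 1$) yields the claimed limit after cancelling a common factor of $1-t$ between numerator and denominator.

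For part (2), I would keep the reflection viewpoint but rotate the direction of $[q_1, q_2]$ from the perpendicular by $cu_1$. Writing $\mathbf{e}_1$ for the unit direction along $[p, q_1]$ and $\mathbf{e}_1^{\perp}$ for its $90^{\circ}$ rotation, the direction of $[q_1,q_2]$ becomes $\mathbf{d} = \cos(cu_1)\,\mathbf{e}_1^{\perp} \pm \sin(cu_1)\,\mathbf{e}_1$. Because $|q_1| = |\mathbf{d}| = 1$, intersecting the line $q_1 + s\mathbf{d}$ with $C$ produces the clean reflection identity $q_2 = q_1 - 2(\mathbf{d}\cdot q_1)\mathbf{d}$, so $u_2$ is again explicit in $u_1,v_1,t,c$. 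A first-order Taylor expansion of $\cos(cu_1)$ and $\sin(cu_1)$ in $u_1$ and collection of $O(u_1)$ terms then isolates the additive correction $2c$ on top of the part-(1) limit.

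The only real subtlety is a degeneracy as $u_1 \to 0$: the line $[q_1, q_2]$ becomes tangent to $C$ at $(0,1)$, so $q_1$ and $q_2$ coalesce and the quadratic determining $q_2$ has a double root in the limit. A naive computation that subtracts nearly equal coordinates would lose the leading-order information, but Vieta's product formula (equivalently, the reflection identity above) exposes the cancellation cleanly: a factor proportional to $v_1 - t \to 1-t$ appears in both numerator and denominator and matches exactly the order of vanishing, leaving a finite ratio. Part (2) demands the same bookkeeping, with the extra observation that the $O(u_1)$ angular deflection produces an $O(u_1)$ change in $u_2$ relative to its part-(1) value, and therefore an $O(1)$ change in the limiting ratio $u_2/u_1$.
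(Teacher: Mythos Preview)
Your approach is essentially the same as the paper's: both compute $u_2$ explicitly as the rational function $u_2 = \dfrac{(1-t^2)\,u_1}{1-2v_1 t+t^2}$ (with $v_1=\sqrt{1-u_1^2}$) and then pass to the limit, and for part~(2) both treat the angular perturbation by a similar direct calculation. Your Thales/antipode observation for (1) and the chord-reflection identity $q_2 = q_1 - 2(\mathbf d\cdot q_1)\mathbf d$ for (2) are clean ways to \emph{derive} that formula, whereas the paper simply quotes it and says the rest is an elementary computation; there is no substantive difference in method.
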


\begin{proof}
We note that $u_2 = \frac{(1-t^2)u_1}{1-2\sqrt{1-u_1^2}t+t^2}$, from which (1) immediately follows. The second part
can be proven with a similar elementary computation.
Note that in the second case the angle between $[p,q_1]$ and the segment connecting $p$ and its orthogonal projection
on $[q_1,q_2]$ is $c u_1$.
\end{proof}

Now, consider a spherical circle on $S$ that touches $G_1$ and $G_2$, and let $u_1$ and $u_2$ denote, respectively,
the distance of its closest and its farthest point from the $z$-axis.
Let $L$ denote the limit of $u_2/u_1$ as $u_1$ tends to zero. Note that this limit exists, is greater than one,
and does not depend on the radius of $S$, only on the angle of $G_1$ and $G_2$;
this follows from the observation that spherical space `locally' is Euclidean.

Now we choose the radius of the spherical neighborhood of $p$ such that $2(R-1)< L-1$ is satisfied.
Thus, by (1) of Lemma~\ref{lem:limit}, there is a sufficiently small circle $C$ on $S$
touching $G_1$ and $G_2$  such that the truncation of $K$ by the plane containing $C$ has a stable point,
with respect to $o$, on the truncated, disk-shaped part.
Furthermore, by (2) of the same Lemma, we may choose $0<\tau<1$ in a way that
if $C$ is sufficiently small, then the distance of this
point from the boundary of $C$ is at least $\tau$ times the radius of the circle.

Let $H_\varepsilon$ denote the plane perpendicular to and intersecting $[o,p]$,
at the distance $\varepsilon$ from $p$.
Let $\bar{K}_\varepsilon$ denote the truncation of $K$ by this plane.
Let $U_\varepsilon$ be the set of the centers of all the convex bodies $D$ satisfying $\bar{K}_{2\varepsilon} \subseteq D \subseteq K$.
Clearly, we may choose a sufficiently small $\varepsilon$ such that the Morse-Smale complex of $K$ with respect to any point of $U_\varepsilon$ is homeomorphic to that with respect to $o$.
Furthermore, using (3) of Lemma~\ref{lem:truncationbysphere}, we may choose $\varepsilon$ in a way that both the diameter of $U_\varepsilon$, and the diameter of the projection $U^S_\varepsilon$
on $S$ of $U_\varepsilon$ from $q$ is at most $c \varepsilon^2$ for some $c>0$ independent of $\varepsilon$.
Note that for small values of $\varepsilon$, these projections belong to $S$ (recall that $q$ is the center of the sphere of radius $R$ forming the spherical neighborhood of $p$ in $\bd K$).
Furthermore, for any fixed $\delta > 0$, we may also choose $\varepsilon$ in a way that for any $u \in U_{\varepsilon}$,
the intersection points of any integral curve of the gradient flow with respect to $u$, connecting from the projection of $u$ from $q$ on $S$ to a saddle point of $K$, with the circle $K \cap H_\varepsilon$, lies on an arc of angle not greater than $\delta$.
We denote these integral curves by $\Gamma^u_1, \Gamma^u_2, \ldots, \Gamma^u_k = \Gamma^u_0$.

Let $C_\varepsilon$ be the circle that touches $G_1$, $G_2$ and $H_\varepsilon$ on the side of $H_\varepsilon$
not containing $o$,
and $K_\varepsilon$ be the truncation of $K$ by the plane containing $C_\varepsilon$.
Note that the radius of $C_\varepsilon$ is at least $\bar{c} \sqrt{\varepsilon}$
for some positive constant $\bar{c}$.
Thus, for sufficiently small $\varepsilon$, the following are satisfied.
\begin{itemize}
\item for any $u \in U_\varepsilon$, $C_\varepsilon$ intersects exactly those curves from amongst
the $\Gamma^u_i$s that we want to intersect with the planar section.
\item No such intersection point is on $G_1$ and $G_2$.
\item The orthogonal projection of $U_\varepsilon$ on the plane of $C_\varepsilon$ is contained in the interior
of $C_\varepsilon$.
\item $U^S_\varepsilon$ is contained in $K_\varepsilon$, and is disjoint from $C_\varepsilon$.
\end{itemize}

Clearly, under the conditions described in the previous
paragraph, $K_\varepsilon$ has three equilibrium points in the closed half plane bounded by $H_\varepsilon$ and
containing $p$: a stable point $p'$ `near' $p$ (that is, in $U^S_\varepsilon$), another stable point $s$ on the planar disk bounded by $C_\varepsilon$, and a saddle point $h$ on $C_\varepsilon$.
This saddle point is connected to both $p'$ and $s$: the integral curve connecting it to $p'$
is a great circle arc, and the other one is a straight line segment.
Furthermore, for every $\Gamma_i$ on the truncated side of $G_1 \cup G_2$, there is a (piecewise differentiable)
integral curve of $K_\varepsilon$ connecting $s$ to the corresponding saddle point of $K_\varepsilon$,
and for every  $\Gamma_i$ on the other side of $G_1 \cup G_2$, there is a similar curve ending at $p'$.
This implies, by exclusion, that $h$ is connected
to the two unstable points in the two chosen regions containing $G_1$ and $G_2$.

Finally, observe that by replacing the truncating plane containing $C_\varepsilon$ by a ball of sufficiently
large radius results in a homeomorphic Morse-Smale complex. Furthermore, `to smooth out' $\bd K_\varepsilon$,
we may replace a neighborhood of $C_\varepsilon$ by a sufficiently small toroidal surface (a part of the surface of a torus), which
results in a convex body $K'$ with a $C^1$-class boundary (where the equilibria has $C^2$-class neighborhood), such that its
Morse-Smale complex is homeomorphic to the one split in the required way.
The fact that no new equilibria appear can be seen geometrically.
We remark that $K_{2\varepsilon} \subseteq K'$, and thus, the center of $K'$ is still contained in $U_\varepsilon$.
Finally, we may apply Lemma~\ref{lem:smoothing_subroutine} to obtain a convex body with a $C^\infty$-class boundary,
which finishes the proof for a stable point.

For an unstable point we may apply a similar consideration. In this case, instead of a truncation with a ball
of large radius, we expand the original ball with a circular cone the axis of which contains the ball center $q$.

\begin{rem}
One may wonder whether we could have proven our statement using only one local truncation by a plane. However, as we already indicated in the Introduction, this
appears to be impossible:
The phenomenon described in Lemma~\ref{lem:limit} shows that to carry out an arbitrary splitting of an arbitrary  (given) vertex we need one additional free parameter and this is
the radius of the truncating sphere. In terms of local quantities, this implies that even in the special case when the chosen equilibrium is an umbilical point (i.e. its vicinity is spherical),
the required splitting \emph{cannot} be realized by a planar truncation unless the principal curvature at this point is contained in a given interval determined by the directions of the edges starting at this vertex.
As a consequence, we need to adjust the principal curvature \emph{before} truncating with a plane. This step is achieved in Lemma~\ref{lem:smallradius}.
\end{rem}

\section{Summary and related issues}\label{sec:summary}

The central idea of our paper is to associate vertex splittings with
localized geometric transformations. The latter are defined in such a way
that we can control all combinatorial possibilities. Next we show
that vertex splittings arise in a spontaneous way in various
geometric settings where they may or may not exhaust the
full combinatorial catalog. From this point of view, our construction
creates a framework to study these processes.

\subsection{A road map for the generic bifurcations of one-parameter vector fields}

If we consider generic, 1-parameter families of gradient vector fields on $\S^2$ then it is not true that every element of such a family is Morse-Smale.
Rather, these families produce two types of singularities at which this property may fail: saddle-node bifurcations and saddle-saddle connections \cite{Arnold2}.
The former is a local bifurcation while the latter is a non-local bifurcation. A saddle-node bifurcation corresponds to a vertex splitting or a face contraction on the quasi-dual graph representation of the Morse-Smale complex, while a saddle-saddle connection corresponds to a transformation called \emph{diagonal slide} \cite{Negami}.
Each gradient vector field can be uniquely associated with the quasi-dual graph representation of its Morse-Smale complex, so the evolution of one-parameter families can be studied
on a metagraph $\mathcal {G}$ the vertices of which are graphs $Q^2 \in \mathcal{Q}^2,$ representing the Morse-Smale complexes
and the edges of $\mathcal {G}$ correspond to generic bifurcations in one-parameter families.
Any such family will appear
as a path on $\mathcal {G}$.  Convex bodies associated with some selected graphs (selected vertices of $\mathcal G$) are illustrated in Figure \ref{fig:metagraf}/a3.  A small portion of $\mathcal {G}$ is illustrated in Figure \ref{fig:metagraf}/a1.
Solid edges represent saddle-node bifurcations,
dashed edges represent saddle-saddle connections,  Figure \ref{fig:metagraf}/a2 shows the latter inside the primary equilibrium class
$\{S,U\}=\{2,3\}$.
Theorem \ref{thm:main} states that all vertices of $\mathcal{G}$ can be represented with suitably chosen, smooth convex bodies.
We did not prove the existence of convex bodies carrying  gradient fields which exhibit the codimension 1 bifurcations, corresponding to the edges of $\mathcal{G}$.
Nevertheless, as we point out below, our geometric construction suggests that solid edges (saddle-nodes) exist on convex bodies.
Consider two tertiary classes corresponding to two vertices of $\mathcal{G}$ and assume that they
are in adjacent primary classes, i.e. either the number of stable points or the number of unstable points  differs by one (but not both).
Such pairs neighbour tertiary classes can be observed in Figure \ref{fig:metagraf}/a1, for example $(b,c)$, $(c,e)$,$(c,f)$.
It follows from our geometrical construction (Lemma \ref{lem:main}) that if this pair of vertices of $\mathcal{G}$ are connected by an edge (e.g. $(b,c)$, $(c,e)$) then  there is a one-parameter family $K(\lambda)$ of convex bodies 
between those two tertiary classes that except for one arbitrary short interval $\lambda \in (\lambda ^{\star}-\epsilon,\lambda^{\star}+\epsilon)$, all convex bodies $K(\lambda)$ are generic. Based on this we formulate

\begin{con} \label{con:saddlenode}
For any generic, one-parameter family $\mathbf{v}(\lambda)$ of gradient
vector fields on $\S^2$ exhibiting a codimension 1 saddle-node bifurcation
for one single isolated parameter value $\lambda=\lambda ^{\star}$
there exists a one-parameter family $K(\lambda)$ of (not necessarily
smooth) convex bodies with gradient fields $\nabla r_{K(\lambda)}$
such that the latter is topologically equivalent to $\mathbf{v}(\lambda)$
for every value of $\lambda$.
\end{con}

Theorem \ref{thm:main}, together with Conjecture \ref{con:saddlenode} state that the \em oriented \rm subgraph $\mathcal {G}_v\subset\mathcal {G}$ (illustrated in Figure \ref{fig:metagraf}/b1) containing only vertex splittings, exists among gradient fields associated with
convex bodies. Combinatorial expansion sequences (\ref{gen}) associated with an $n$-vertex graph $Q_n \in \mathcal {Q}_n$ appear on this oriented metagraph as an oriented path of length $n-2$, starting at the root ($P_2$). Observe that in the $k$th step a vertex in the box-diagonal $S+U=k$ is selected. Two such sequences are illustrated in Figure \ref{fig:metagraf}/b2. As pointed out above, beyond saddle-nodes, one-parameter gradient fields also undergo generic, codimension 1 saddle-saddle
bifurcations which are non-local, however, it is not known whether there exists a geometric correspondence for this combinatorial connection inside the primary equilibrium classes.
Although our current geometric argument does not provide any direct hint for their existence on convex bodies, nevertheless we formulate

\begin{con} \label{con:saddlesaddle}
For any generic, one-parameter family $\mathbf{v}(\lambda)$ of gradient
vector fields on $\S^2$ exhibiting any codimension 1 bifurcation
for one single isolated parameter value $\lambda=\lambda ^{\star}$
there exists a one-parameter family $K(\lambda)$ of (not necessarily
smooth) convex bodies with gradient fields $\nabla r_{K(\lambda)}$
such that the latter is topologically equivalent to $\mathbf{v}(\lambda)$
for every value of $\lambda$.
\end{con}

stating that all edges of the metagraph $\mathcal G$ can be realized among convex bodies.
Beyond theoretical interest, these metagraphs admit the study of interesting physical phenomena some of which we briefly discuss below.

\begin{figure}[htp]
\includegraphics[width=1.1\textwidth]{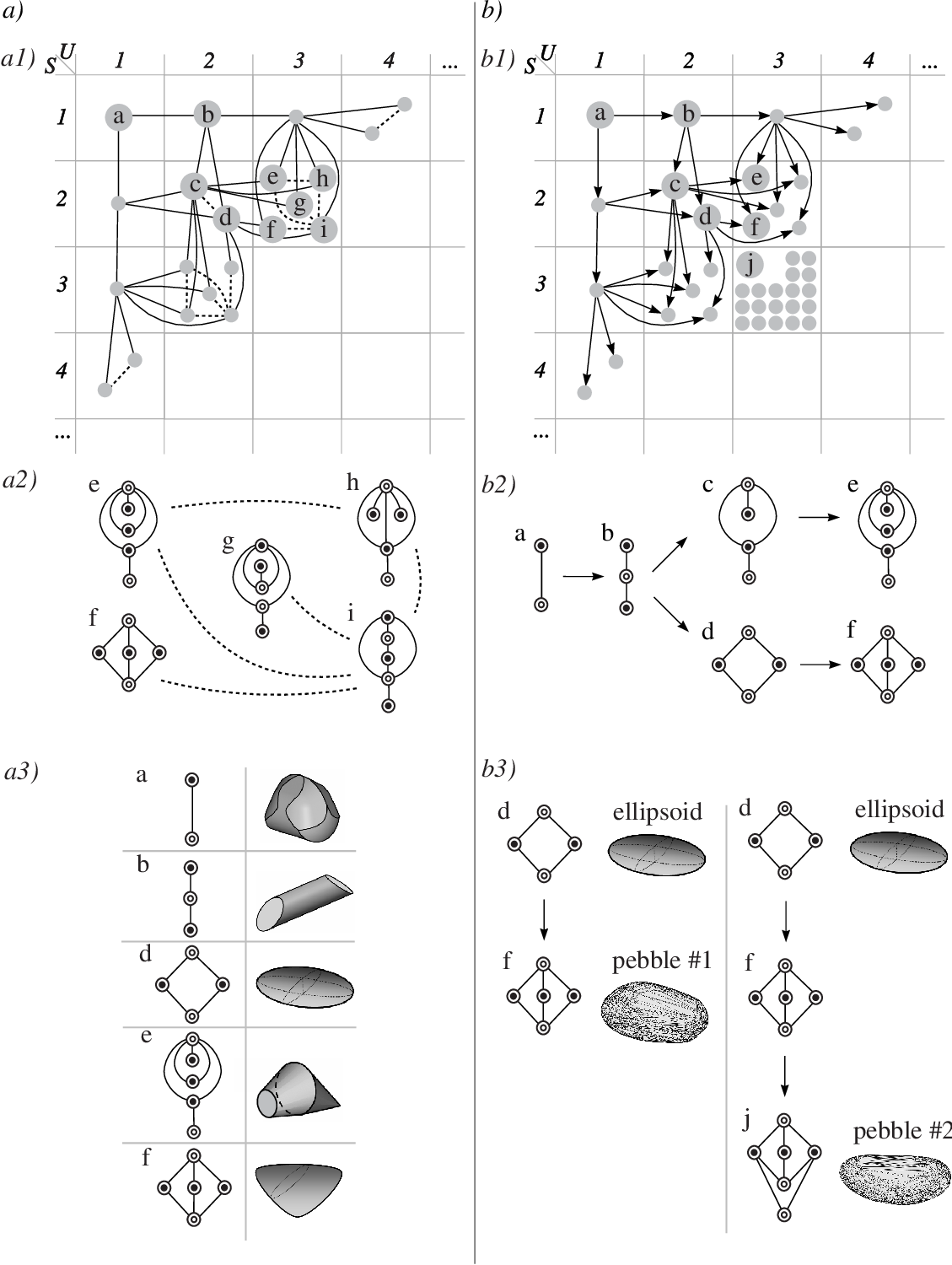}
\caption[] {a1) Metagraph $\mathcal {G}$ corresponding to generic bifurcations in one-parameter families of vector fields on the sphere
a2) Saddle-saddle connections inside the primary equilibrium class $\{S,U\}=\{2,3\}$
a3) Examples for convex bodies with some selected graphs
b1) Oriented subgraph $\mathcal {G}_v\subset\mathcal {G}$ corresponding to vertex splittings
b2) Two selected combinatorial expansion sequences
b3) Morse-Smale complexes associated with real pebbles, derived with expansion sequences from the ellipsoid}
\label{fig:metagraf}
\end{figure}\subsection{Inhomogeneous bodies}

Our first example is inhomogeneous bodies. So far, throughout the paper we assumed convexity and material homogeneity. Relaxing the latter constraint is equivalent to keep the convex surface $\bd K$ as the boundary of the body but let the mass be concentrated at the center of mass $C$. As the location of $C$ is varied in time as a curve $\mathbf{r}_C(t)$, it generates a one-parameter family of gradient vector fields on $\bd K$.
A classical result in catastrophe theory states that the number of critical points of the gradient changes if and only if $\mathbf{r}_C(t)$ transversely passes through one of the two caustics of the body \cite{Poston}. Caustics (also known as focal surfaces) are the two surfaces formed by the curvature centers corresponding to the principal curvatures of $\bd K$. Figure~\ref{fig:caustic} a)-c) shows the two caustics of an ellipsoid, a)  corresponding to curvature minimum, b) corresponding to curvature maximum, c) intersection (superposition) of both caustics.
When $\mathbf{r}_C(t)$ transversely crosses the caustic defined by the minimal principal curvature, a saddle and an unstable point meet at a saddle-node; when $\mathbf{r}_C(t)$ transversely crosses the other caustic, a saddle and a stable point collide. Every saddle-node bifurcation corresponds to a vertex splitting (or face contraction, depending on the direction) on the quasi-dual Morse-Smale graph, so at each such event
the corresponding path on $\mathcal G$ will move from one  box-diagonal $S+U=k$ to one of its neighbor diagonals. Figure~\ref{fig:caustic} d) shows the different Morse-Smale graphs in the different domains determined by the intersections of the two caustics.
It is easy to see that if $C$ is located far enough from the center of mass of the homogeneous body
then the corresponding Morse-Smale complex is represented by the path graph $P_2$.

\begin{figure}[ht]
\includegraphics[width=\textwidth]{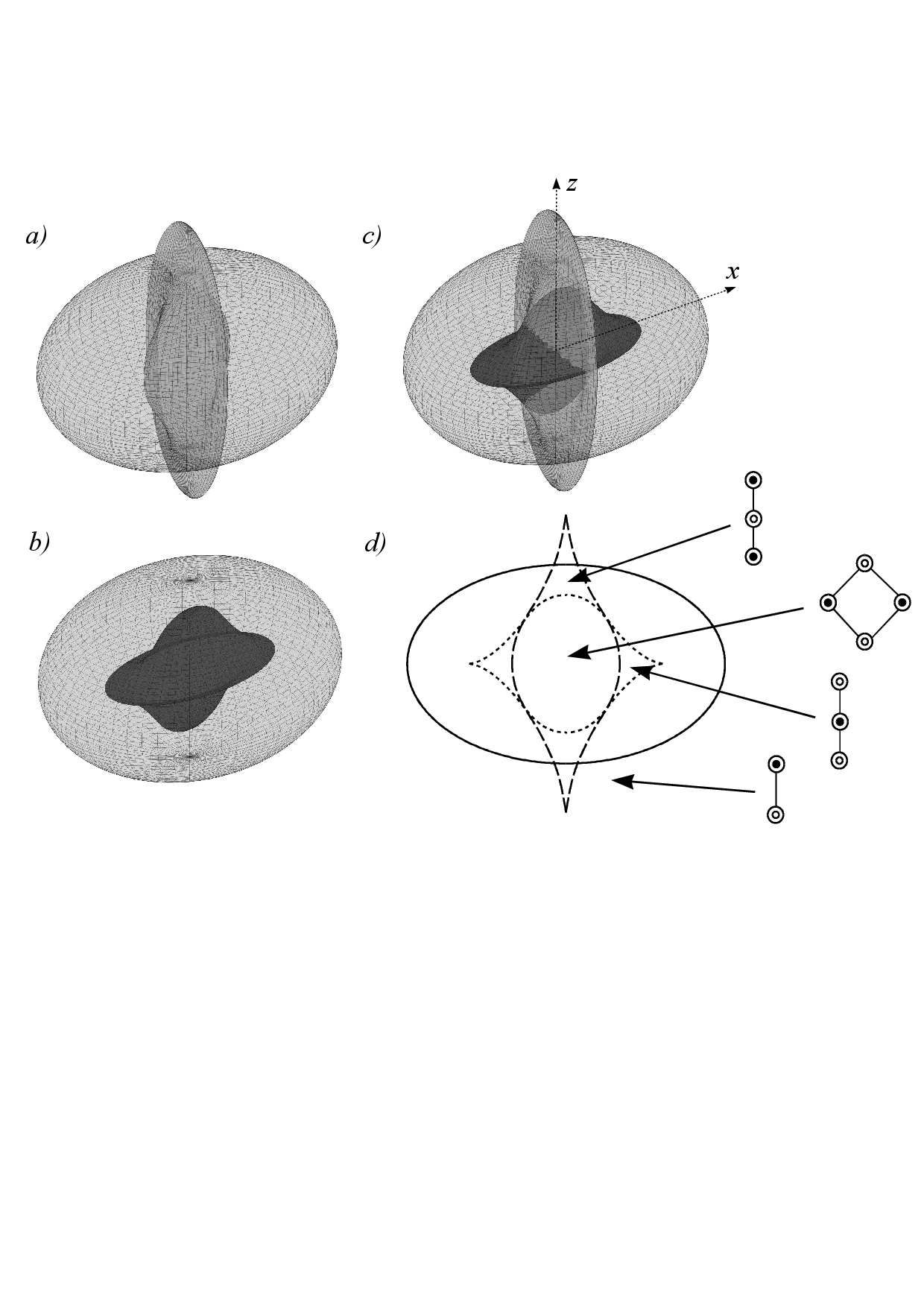}
\caption[]{Caustics of an ellipsoid.
a) Caustic defined by the minimal principal curvature
b) Caustic defined by the maximal principal curvature
c) Superposition  of the two caustics
d) Quasi-dual Morse-Smale graphs (in class ${\mathcal Q}$) in the different domains, shown on the plane section through axes $x$ and $z$}
\label{fig:caustic}
\end{figure}

\subsection{Collisional abrasion: chipping of rocks}

Our second example is pebble abrasion via collisions. This process is most often described by averaged geometric PDEs, the most general such model is given by Bloore \cite{Bloore} as

\ben
v=1+2bH+cK
\een
where $v$ is the attrition speed along the inward surface normal, $H,K$ are the Mean and Gaussian curvatures, $b,c$ are constants.
Solutions of these PDEs correspond to (inward propagating) wave fronts.
The  actual physical process is somewhat different: it is based on discrete collisions where small amounts
of material are being removed in a strongly localized area.
Simple but natural interpretations of the discrete, physical abrasion process are \em chipping algorithms \rm \cite{DSV},\cite{SDWH} and \cite{Krapivsky} where in each step a small amount of material is chipped off at point $p$ by intersecting the body with a plane resulting as a small parallel translation of the tangent at $p$.
We call such an operation a \em chipping event \rm and their sequence a \em chipping sequence. \rm

In Section \ref{sec:geometry} we showed that any vertex splitting can be achieved by a suitably chosen convexity-preserving local truncation.
It is not very difficult to show a related, though converse statement: any \emph{sufficiently small} chipping event will either leave the Morse-Smale complex invariant or result in one or two consecutive vertex splittings.
If we regard the material abraded in a chipping event as a random variable $\chi$
with very small, but finite expected value $E(\chi)$ and very small variance (as it is often done in chipping algorithms) then we expect that for some time intervals chipping events will be sufficiently small
to form finite chipping sequences the \em subsequences \rm of which are geometric expansion sequences of type (\ref{gen1}).

Chipping sequences do not represent a rigorous, algorithmic discretization of the PDEs, rather, they
can be regarded as an alternative, discrete approximation of the physical process. As it was pointed out in \cite{DLS} and \cite{Domokos}, pebble surfaces display equilibria on two, well-separated scales.
While the PDE description accounts for the evolution of global equilibria, local equilibria, corresponding to the fine structure of the surface are only captured by the chipping model.
Figure \ref{fig:pebble} shows the high-accuracy scan of a real pebble with equilibria and the primal representation of the Morse-Smale complex. We can observe flocks of
local equilibria accumulated around global equilibrium points.
In fact, one motivation behind chipping algorithms is to better understand the interplay between the two scales. Chipping models appear to be successful in explaining laboratory experiments (cf. \cite{Krapivsky} and \cite{SDWH})
as well as geological field observations \cite{SDWH}. The connection between chipping models and our geometric expansion algorithms suggests that the number of \emph{local} equilibrium points may increase in abrasion processes
for finite time intervals. Whether and how this process interacts with the evolution of \emph{global} equilibria is an open question not addressed in the current paper.

Figure \ref{fig:metagraf}/b3 illustrates two, rather short geometrical expansion sequences leading to Morse-Smale complexes associated with real pebbles. While the abrasion of these
pebbles was not monitored, given their simple Morse-Smale complex and nearly-ellipsoidal shape it is realistic to assume that these geometric expansion sequences
are  subsequences of the actual physical abrasion process (modeled by chipping sequences) which produced these shapes. Needless to say, many more experiments are needed to verify this theory.

\begin{figure}[ht]
\includegraphics[width=300pt]{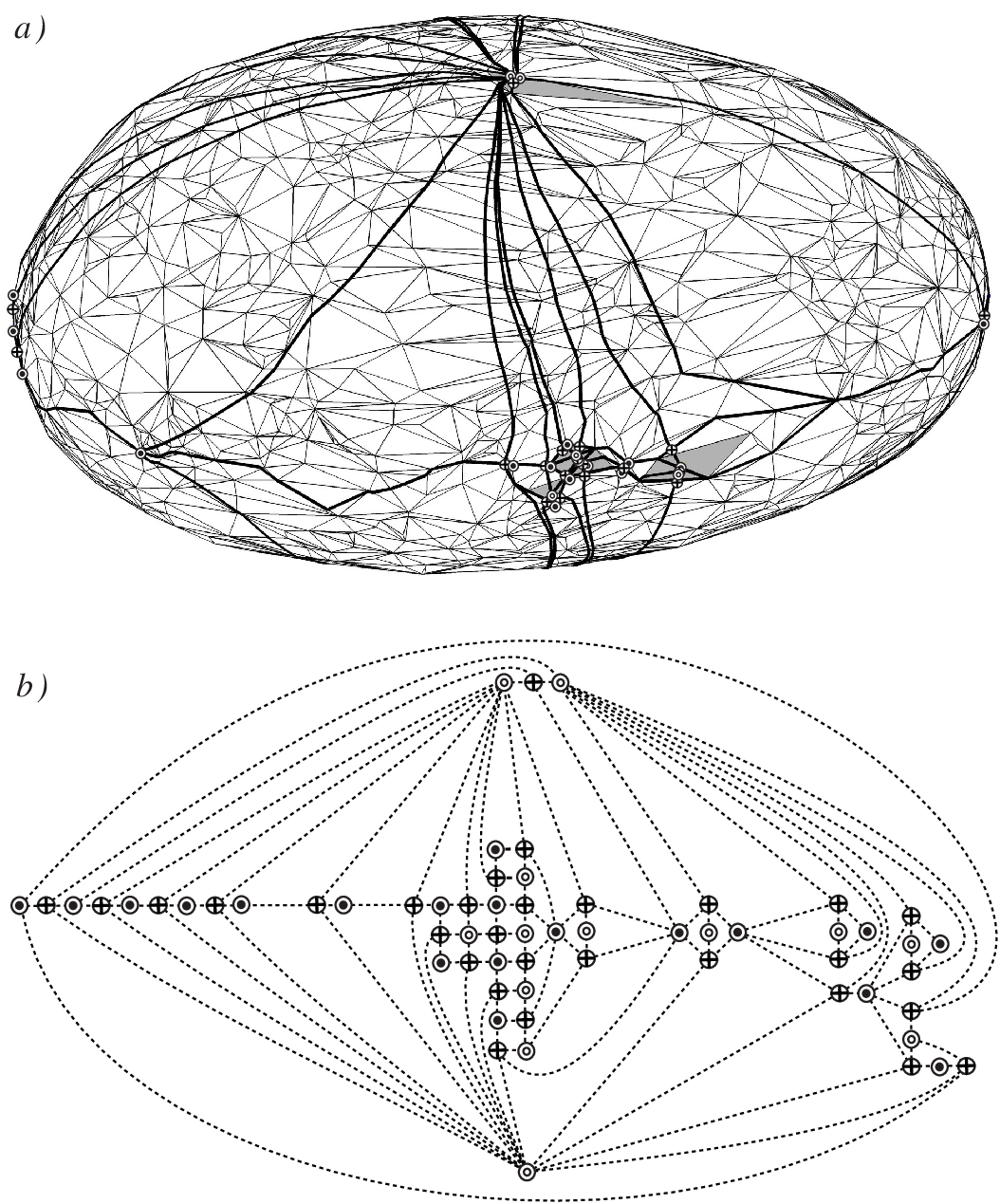}
\caption[]{The Morse-Smale complex of a real pebble: primal Morse-Smale graph in class ${\mathcal Q}^{3\star}$
a) drawn on the pebble's surface
b) drawn on the plane }
\label{fig:pebble}
\end{figure}

\subsection{Concluding remarks}

In this paper we showed that Morse-Smale functions $\mathcal{M}_C$ associated with convex bodies exhaust all combinatorial possibilities, i.e., to any two-colored quadrangulation $\bar Q^2 \in \mathcal{Q}^2$ on $\S^2$ there
exists a convex body $\bar K$ such that $\bar Q$ is the quasi-dual representation of the Morse-Smale complex associated with $\bar K$. We proved our claim by showing that to each possible
combinatorial expansion sequence $Q^2_i$ generated by vertex splittings, there exists a coupled geometric expansion sequence $K_i$ such that
$Q^2_i$ is the quasi-dual representation of the Morse-Smale complex associated with $K_i$. Geometric expansion sequences were created by local, convexity preserving truncations of the
convex body. Beyond proving the existence of all combinatorially possible
convex shapes, we also generalized the classification scheme of \cite{Varkonyi}.  We also showed that geometric expansion sequences appear to be part of the natural geometrical description of collisional abrasion.

\section{Acknowledgement}
This research was supported by OTKA grant T104601. The authors thank an anonymous referee for suggesting substantial improvements to the paper.
The authors are indebted to E. Makai Jun., G. Etesi and Sz. Szab\'o for their valuable comments on smooth approximations of continuous functions.
Z. L\'angi also acknowledges the support of the Fields Institute for Research in Mathematical Sciences, University of Toronto, Toronto ON, Canada,
and the J\'anos Bolyai Research Scholarship of the Hungarian Academy of Sciences.

\end{document}